\newcommand{\mK}{\mathcal{K}}
\newcommand{\be}{\boldsymbol{e}}
\newcommand{\bu}{\boldsymbol{u}}
\newcommand{\ff}{\boldsymbol{f}}
\newcommand{\bC}{\boldsymbol{C}}
\newcommand{\bD}{\boldsymbol{D}}
\newcommand{\bL}{\boldsymbol{L}}
\newcommand{\bS}{\boldsymbol{S}}
\newcommand{\bT}{\boldsymbol{T}}
\newcommand{\R}{{\mathbb R}}
\newcommand{\oom}{{\overline{\omega}}}
\newcommand{\na}{{\nabla}}
\newcommand{\pa}{{\partial}}
\newcommand{\al}{{\alpha}}
\newcommand{\te}{{\theta}}
\newcommand{\bt}{{\beta}}
\newcommand{\ro}{{\rho}}
\newcommand{\la}{{\lambda}}
\newcommand{\de}{{\delta}}
\newcommand{\om}{{\omega}}
\newcommand{\sg}{{\sigma}}
\newcommand{\z}{{\zeta}}
\newcommand{\Om}{{\Omega}}
\newcommand{\De}{{\Delta}}
\newcommand{\Ov}{{\overline{\Omega}}}
\newcommand{\Ga}{{\Gamma}}
\newcommand{\ep}{{\epsilon}}
\newcommand{\ph}{{\phi}}
\newcommand{\ho}{{\widehat{\omega}}}
\newcommand{\bB}{\boldsymbol{B}}
\def\vs1{\vspace{1ex}}
\newtheorem{definition}{Definition}[section]
\newtheorem{theorem}{Theorem}[section]
\newtheorem{lemma}[theorem]{Lemma}
\newtheorem{proposition}{Proposition}[section]
\numberwithin{equation}{section}
\def\be{\begin{equation}}
\def\ee{\end{equation}}
\def\ed{\end{document}}
\begin{document}
\title{\bf\normalsize On classical solutions to
elliptic boundary value problems. The full regularity spaces $\,
C^{0,\,\la}_\al(\Ov) \,$. }
\author{by H.~Beir\~ao da Veiga}
\maketitle
\begin{abstract}
Let $\,\bL\,$ be a second order, uniformly elliptic operator, and
consider the equation $-\,\bL\, u=\,f \,$ under the homogeneous
boundary condition $\, u=\,0 \,.$ It is well known that $\,f \in
C(\Ov)\,$ does not guarantee $\,\na^2\,u \in C(\Ov)\,$. This gap led
to look for functional spaces $\,C_*(\Ov)\subset\,C(\Ov)\,,$ as
large as possible, for which $\,f\in \,C_*(\Ov)\,$ \emph{merely}
guarantees the continuity of $\,\na^2\,u\,$ (but nothing more, say).
H\"older continuity is too restrictive to fulfill this minimal
requirement since in this case $\,\na^2\,u\,$ inherits the whole
regularity enjoyed by $\,f\,$ (we say that \emph{full regularity}
occurs). This two opposite situations led us to look for significant
cases in which \emph{intermediate regularity} (i.e., between
\emph{mere continuity} and \emph{full regularity}) occurs. This
holds for data in Log spaces $\, D^{0,\,\al}(\Ov)\,,$
$\,0<\,\al<\,+\infty\,,$ simply obtained  by replacing in the
modulus of continuity of H\"older spaces the quantity $
\,1/|\,x-\,y|\,$ by $\, \log\,(\,1/|\,x-\,y|)\,.$ If $\,f \in
D^{0,\,\al},$ for some fixed $\,\al>\,1\,,$ then $\,\na^2\,u \in
D^{0,\,\al-\,1}\,.$ This regularity is optimal.\par%
The above picture opened the way to further investigation. Below we
study the more general problem of data $\,f\,$ in subspaces of
continuous functions $\,D_{\oom}\,$, characterized by a given
\emph{modulus of continuity} $\,\oom(r)\,.$ H\"older and Log spaces
are particular cases. A significant new, lets say curious, case is
shown by the family of functional spaces $\, C^{0,\,\la}_\al(\Ov)
\,,$ $\,0 \leq\,\la<\,1\,$, $\,\al \in\,\R\,$. In particular,
$\,C^{0,\,\la}_0(\Ov)=\,C^{0,\,\la}(\Ov)\,$, and
$\,C^{0,\,0}_\al(\Ov)=\, D^{0,\,\al}(\Ov)\,$. Main point is that
full regularity occurs for $\,\la>\,0\,$ and arbitrary $\,\al
\in\,\R\,$. If $\,f \in\, C^{0,\,\la}_\al(\Ov) \,$ then $\,\na^2\,u
\in C^{0,\,\la}_\al(\Ov)\,$.

 \vspace{.2cm}

{\bf Mathematics Subject Classification}: 35A09,\,35B65, \,35J25\,.

\vspace{.2cm}

{\bf Keywords.} Linear elliptic boundary value problems, classical
solutions, continuity properties of higher order derivatives, data
spaces of continuous functions, intermediate and full regularity.
\end{abstract}
\bibliographystyle{amsplain}
\section{Introduction.}\label{introduction}
We start by some notation. By $\Om$ we denote an open, bounded,
connected set in $\R^n\,$, locally situated on one side of its
boundary $\,\Ga\,.$ The boundary $\,\Ga\,$ is of class
$\,C^{2,\,\la}\,,$ for some $\,\la\,,$ $\,0<\,\la \leq \,1\,.$
Notation $\Om_0 \subset \subset \Om$ means that the open set
$\Om_0$ satisfies the property $\Ov_0 \subset \Om$.\par%
By $\,C(\Ov)\,$ we denote the Banach space of all real continuous
functions $\,f\,$ defined in $\,\Ov\,$. The "sup" norm is denoted by
$ \|\,f\,\|\,. $ We also appeal to the classical spaces
$\,C^k(\Ov)\,$ endowed with their usual norms $ \|\,u\,\|_k\,,$ and
to the H\"older spaces $\,C^{0,\,\la}(\Ov)\,,$ endowed with the
standard semi-norms and norms. The space $\,C^{0,\,1}(\Ov)\,,$ is
sometimes denoted by $\,Lip\,(\Ov)\,,$ the space of Lipschitz
continuous functions in $\,\Ov\,.$ We set
$$
I(x;\,r)=\,\{\,y:\,|y-\,x| \leq\, r\,\}\,, \quad\,\Om(x;\,r)=\, \Om
\,\cap\,I(x;\,r)\,.
$$
Symbols $c\,$ and $\,C\,$ denote generical positive
constants. We may use the same symbol to denote different constants.%

\vspace{0.2cm}

Let us present some reasons that led us to the present study. We say
that solutions to a specific boundary value problem are
\emph{classical} if all derivatives appearing in the equations and
boundary conditions are continuous up to the boundary on their
domain of definition. We call \emph{"minimal assumptions problem"}
the investigation of "minimal assumptions" on the data which
guarantee that solutions are classical. The very starting point of
these notes was reference \cite{BVJDE}, where the main goal was to
look for \emph{minimal assumptions} on the data which guarantee
classical solutions to the $\,2-D\,$ Euler equations in a bounded
domain. The study of this problem led to the auxiliary problem
\begin{equation}
\left\{
\begin{array}{l}
\bL\,u=\,f \quad \textrm{in} \quad \Om \,,\\
u=\,0 \quad \textrm{on} \quad \Ga\,.
\end{array}
\right.
\label{lapnao}
\end{equation}
We do not discuss here the relation between the Euler equations and
problem \eqref{lapnao}. The interested reader is referred to the
original paper \cite{BVJDE}, and also to \cite{BVJP}, where a
complete description is presented.\par%
Below we consider second order, uniformly elliptic operators
\begin{equation}
\bL=\,\sum_1^n a_{i\,j}(x)\, \pa_i\,\pa_j\,.%
\label{elle}
\end{equation}
Without loss of generality, we assume that the matrix of
coefficients is symmetric. To avoid conditions depending on the
single case, we assume once and for all that the operator's
coefficients are Lipschitz continuous in $\,\Ov\,.$
Lower order terms can be considered without difficulty.\par%
A H\"older continuity assumption on $\,f\,$ is unnecessarily
restrictive to guarantee $\,\na^2\,u\in\,C(\Ov)\,,$ where $\,u\,$ is
the solution to problem \eqref{lapnao}. On the other hand,
continuity of $\,f\,$ is not sufficient to guarantee continuity of
$\,\na^2\,u\,.$ This situation led us to consider in \cite{BVJDE} a
Banach space $\,\,C_*(\Ov)\,$, $\, C^{0,\,\la}(\Ov)\subset
\,C_*(\Ov)\subset\,C(\Ov)\,,$ for which the following result holds
(Theorem 4.5, in \cite{BVJDE}).
\begin{theorem}
Let $\,f \in \,C_*(\Ov)\,$ and let $\,u\,$ be the solution of
problem \eqref{lapnao}. Then $\,u \in\, C^2(\Ov)\,,$ moreover,
\begin{equation}
\|\,\na^2\,u\,\| \leq \,c\,\|\,f\,\|_*\,.%
\label{lapili}
\end{equation}
\label{laplaces}
\end{theorem}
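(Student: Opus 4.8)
The plan is to reduce the general problem to a model situation by a partition of unity, freeze coefficients, and exploit a representation of the solution via a singular integral (Newtonian-type potential) whose kernel is the second derivative of the fundamental solution of the frozen operator. The space $\,C_*(\Ov)\,$ is, by construction (Theorem 4.5 in \cite{BVJDE}), precisely tailored so that the Calderón–Zygmund–type operator $\,f\mapsto \na^2(\text{potential of }f)\,$ maps $\,C_*\,$ into $\,C(\Ov)\,$ with a bound by $\,\|f\|_*\,$; the task here is to organize the classical Schauder machinery so that this single estimate propagates to problem \eqref{lapnao} on the curved domain.

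First I would fix a point $\,x_0\in\Ov\,$ and, using a cutoff $\,\chi\,$ supported near $\,x_0\,$, write $\,v=\chi u\,$, which solves $\,\bL_0 v = g\,$ where $\,\bL_0=\sum a_{ij}(x_0)\pa_i\pa_j\,$ is the frozen operator and $\,g = \chi f + (\bL-\bL_0)(\chi u) + [\bL,\chi]u\,$. Interior points are handled directly; boundary points require first flattening $\,\Ga\,$ by the $\,C^{2,\la}\,$ change of variables, under which the principal part stays uniformly elliptic with Lipschitz coefficients and the boundary condition $\,u=0\,$ is preserved. On the half-space one represents $\,v\,$ by the half-space Green function of $\,\bL_0\,$; since $\,\bL_0\,$ has constant coefficients this Green function is the fundamental solution plus its reflection, so $\,\na^2 v\,$ is a sum of Newtonian potentials of $\,g\,$ against the standard homogeneous degree $\,-n\,$ Calderón–Zygmund kernels. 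Now invoke the key estimate behind Theorem 4.5 of \cite{BVJDE}: for data in $\,C_*\,$, such potentials have continuous second derivatives up to the boundary, with $\,\|\na^2 v\|\le c\,\|g\|_*\,$.

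The remaining point is the bootstrap: the forcing term $\,g\,$ contains $\,(\bL-\bL_0)(\chi u)\,$, whose top-order piece is $\,(a_{ij}(x)-a_{ij}(x_0))\pa_i\pa_j(\chi u)\,$. On a ball of radius $\,r\,$ around $\,x_0\,$ this factor is $\,O(r)\,$ in sup norm by Lipschitz continuity of the coefficients, so one needs the $\,C_*\,$-norm of this product to be controlled by (small constant)$\,\times\|\na^2 u\|\,$ plus lower-order terms — i.e.\ one needs $\,C_*\,$ to be a multiplication-stable algebra under multiplication by the Lipschitz functions $\,a_{ij}-a_{ij}(x_0)\,$, with the operator norm of that multiplication tending to $\,0\,$ as $\,r\to 0\,$. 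I would verify this multiplier property from the explicit modulus-of-continuity description of $\,C_*\,$, then absorb the small term into the left-hand side, sum the local estimates over a finite cover of $\,\Ov\,$, and combine with the a priori $\,C^{1,\al}\,$ (indeed $\,W^{2,p}\,$ for all $\,p\,$) bound for $\,u\,$ in terms of $\,\|f\|\,$ to dispose of the commutator and lower-order contributions. This yields $\,u\in C^2(\Ov)\,$ together with \eqref{lapili}.

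The main obstacle is exactly this last multiplier/absorption step: one must check that multiplication by a Lipschitz function is bounded on $\,C_*(\Ov)\,$ and that localizing to a small ball makes the relevant operator norm small, so that the frozen-coefficient error can be absorbed — the curved-boundary flattening and the finite-cover summation are then routine, and the core singular-integral continuity is supplied by Theorem~\ref{laplaces} itself, i.e.\ by \cite{BVJDE}.
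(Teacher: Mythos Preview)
Your freeze-and-absorb strategy has a genuine gap at exactly the point you flag as ``the main obstacle,'' and it is not a technicality that can be checked: the loop cannot close in this setting. The difficulty is that Theorem~\ref{laplaces} asserts only $\,\na^2 u\in C(\Ov)\,$, \emph{not} $\,\na^2 u\in C_*(\Ov)\,$; the paper is explicit that no additional regularity beyond mere continuity is obtained for $\,C_*\,$ data. Now look at your error term. With $\,b(x)=a_{ij}(x)-a_{ij}(x_0)\,$ Lipschitz (so $\,|b|\le Lr\,$ on the ball of radius $r$) and $\,h=\pa_i\pa_j(\chi u)\,$, one has
\[
\om_{bh}(s)\le \|b\|\,\om_h(s)+\|h\|\,\om_b(s)\le Lr\,\om_h(s)+L\|h\|\,s,
\]
hence
\[
[\,bh\,]_*\ \le\ Lr\,[\,h\,]_* + L\,R\,\|h\|\,.
\]
The second term is harmless, but the first contains $\,[\,h\,]_*=[\,\na^2(\chi u)\,]_*\,$, which you do not control: the left-hand side of your estimate is the \emph{sup} norm $\,\|\na^2 u\|\,$, not $\,\|\na^2 u\|_*\,$. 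If instead you tried to absorb into $\,\|\na^2 u\|_*\,$, you would be proving $\,\|\na^2 u\|_*\le c\,\|f\|_*\,$, i.e.\ full regularity in $\,C_*\,$, which is false. The standard Schauder perturbation mechanism works precisely because in H\"older (and, as the paper shows, in H\"olog) spaces the map $\,f\mapsto\na^2 u\,$ preserves the space, so the iteration closes; in $\,C_*\,$ it maps into the strictly larger space $\,C\,$, and the loop is broken.

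The paper does not give a self-contained proof of Theorem~\ref{laplaces}; it cites \cite{BVJDE} (see also \cite{BVSTOKES}, \cite{BV-LMS}) and indicates that the argument proceeds directly through pointwise estimates on the Green function $\,G(x,y)\,$ of the full variable-coefficient operator: one writes $\,\na^2 u(x)=\int \na^2_x G(x,y)\,f(y)\,dy\,$ and shows continuity of this singular integral from the Dini condition on $\,f\,$, using size and cancellation properties of $\,\na^2_x G\,$ that hold equally for variable Lipschitz coefficients. That route needs no perturbation and no absorption, which is why it succeeds where the frozen-coefficient scheme does not.
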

The above result was stated for constant coefficients operators,
however the proof applies without any modification to variable
coefficients case, since it is based on some properties of the Green
functions, which hold in the general case.

\vspace{0.2cm}

For the readers convenience we recall definition and main properties
of $\,C_*(\Ov)\,$ (see \cite{BVJDE} and, for complete proofs,
\cite{BVSTOKES}).
Define, for $\,f \in \,C(\Ov)\,,$ and for each $\,r>\,0\,$,%
\begin{equation}
\om_f(r) \equiv \, \sup_{\,x,\,y
\in\,\Om \,;\, 0<\,|x-\,y| \leq\,r } \,|\,f(x)-\,f(y)\,|\,,%
\label{cinco}
\end{equation}
and consider the semi-norm
\begin{equation}%
[\,f\,]_* =\,[\,f\,]_{*,\,R} \equiv \int_0^R \,\om_f(r) \,\frac{dr}{r}\,,%
\label{seis}
\end{equation}
where $\,R>\,0\,$ is fixed. The finiteness of the above integral is
known as \emph{Dini's continuity condition}. We define the functional space
\begin{equation}
C_*(\Ov) \equiv\,\{\,f \in\,C(\,\Ov): \,[\,f\,]_*
<\,\infty\,\}%
\label{cstar}
\end{equation}
normalized by $ \,\|\,f\,\|_* =\,[\,f\,]_*+\,\|\,f\,\|\,.$ Norms
defined for two distinct values of $\,R\,$ are equivalent. We have
shown that $\,C_*(\Ov)\,$ is a Banach space, that the embedding $\,
C_*(\Ov) \subset \,C(\Ov)\,$ is compact, and that
the set $\,C^{\infty}(\Ov)\,$ is dense in $\,C_*(\Ov)\,.$%
\vspace{0.2cm}

The regularity theorem \ref{laplaces} for data in $\,C_*(\Ov)\,$
raise a number of new questions. Contrary to the case of H\"older
continuity, where full regularity is restored ($\,\na^2\,u\,$ and
$\,f\,$ has the same regularity), no significant additional
regularity is obtained for data in $\,C_*(\Ov)\,$, besides mere
continuity of $\,\na^2\,u\,.$ So, we are in the presence of two
totally opposite behaviors. An "intermediate" situation is shown by
the Log spaces $\, D^{0,\,\al}(\Ov)\,,$ $\,0<\,\al<\,+\infty\,.$ In
the constant coefficients case, if $\,f \in
D^{0,\,\al}_{loc}(\Om)\,$ for fixed $\,\al>\,1\,,$  then $\,\na^2\,u
\in D^{0,\,\al-\,1}_{loc}(\Om)\,.$ This regularity result is
\emph{optimal}. Furthermore, it holds up to "flat boundary points".
See theorem \ref{laplohas} below.%

\vspace{0.2cm}

The above picture leads us to consider general data spaces $\,D_\oom
(\Ov)\,$, characterized by a given \emph{modulus of continuity}
function $\,\oom(r)\,.$ These spaces are contained between
$\,Lip(\Ov)\,$ and $\,C_*(\Ov)\,$. H\"older and Log spaces are
particular cases. To each suitable $\,\oom(r)\,$ there corresponds a
$\,\ho(r)\,$ such that $\,\na^2\,u \in D_\ho\,$ for $\,f \in
\,D_\oom\,,$ see theorem \ref{sufasvero}. Clearly, $\,\oom(r)\leq
\,c\,\ho(r)\,$, for some $c>0$. This situation occurs for data in
Log spaces, see theorem \ref{laplolas}. Furthermore, if a reverse
inequality $\,\ho(r)\leq \,c\,\oom(r)\,$ holds, then full regularity
occurs, see theorem \ref{sufasvero-2}. This is the situation for
data in H\"older spaces. A more general, quite significant, case of
full regularity concerns the new family of functional spaces
$\,C^{0,\,\la}_\al(\Ov)\,,$ $\,0 \leq\,\la<\,1\,$, $\,\al
\in\,\R\,$, called here H\"olog spaces. For $\,\la>\,0\,$ and
$\,\al=\,0\,,$ $\,C^{0,\,\la}_0(\Ov)=\,C^{0,\,\la}(\Ov)\,,$ is a
H\"older classical space. For $\,\la=\,0\,$ and $\,\al>0\,$,
$\,C^{0,\,0}_\al(\Ov)=\, D^{0,\,\al}(\Ov)\,$ is a Log space. Main
point is that, for $\,\la>\,0\,,$ $\,\na^2\,u $ and $\,f\,$ enjoy
the same $\,C^{0,\,\la}_\al(\Ov)\,$ regularity (full regularity).
See theorem \ref{laplohas}.\par%
The assumptions on the data spaces $\,D_\oom (\Ov)\,$ required in
theorems \ref{sufasvero} and \ref{sufasvero-2} can be substantially
weakened. However, explicit statements in this direction would not
add particularly significant features, at the cost of more involved
manipulations.%
\section{The spaces $\,D_\oom (\Ov)\,$. General properties.}\label{novicas}
In this section we define the spaces $\,D_\oom (\Ov)\,$ and state
some general properties. We consider real, \emph{continuous},
\emph{non-decreasing} functions $\,\oom(r)\,$, defined for
$\,0\,\leq r\,<\,R\,,$ for some $\,R>\,0\,.$ Furthermore,
$\,\oom(0)=\,0\,,$ and $\,\oom(0)>0\,$ for $\,r>\,0\,.$ These three
conditions are assumed everywhere in the sequel. Sometimes, the
functions $\,\oom(r)\,$ will be called \emph{oscillation functions}.\par%
Recalling \eqref{cinco}, we set
\begin{equation}
[f]_\oom = \, \sup_{\,0<\ r\,<\,R } \,\frac{\om_f(r)}{\oom(r)}\,.%
\label{fom}
\end{equation}
Hence,
\begin{equation}
\om_f(r) \leq\, [f]_\oom  \,\oom(r)\,, \quad \forall \, r \in(0,\,R)\,. %
\label{fom2}
\end{equation}
Further, we define the linear space
\begin{equation}
D_\oom(\Ov) =\,\{\, f\in\,C(\Ov) :\, [f]_\oom  <\,\infty\,\}\,.
\label{defdom}
\end{equation}
One easily shows that $\,[f]_\oom\,$ is a semi-norm in
$\,D_\oom(\Ov)\,.$ We introduce a norm by setting
\begin{equation}
\|f\|_\oom =\,[f]_\oom +\, \|f\|\,.%
\label{omnorm}
\end{equation}
Two norms with distinct values of the parameter $\,R\,$ are
equivalent, due to the addition of $\,\|f\|\,$ to the semi-norms.\par%
It is worth noting that, beyond the three conditions on
$\,\oom(r)\,$ introduced above, any other property assumed in the
sequel is merely needed in an arbitrarily small neighborhood of the
origin. This fact may be used without a continual reference. In the
sequel, to avoid continual specification, we introduce the following
definitions.
\begin{definition}
We say that $\,\oom(r)\,$ is \emph{concave} if it is concave in a
neighborhood of the origin, and say that $\,\oom(r)\,$ is
\emph{differentiable} if it is point-wisely differentiable (not
necessarily continuously differentiable), for each
$\,r>\,0\,,$ in a neighborhood of the origin.%
\label{finacas}
\end{definition}
Next we establish some useful properties of the above functional
spaces.
\begin{proposition}
If
\begin{equation}
0<\,k_0 \leq\,\frac{\oom(r)}{\oom_0(r)} \leq\,k_1<\,+\infty\,,
\label{pertinho}
\end{equation}
for $\,r\,$ in some neighborhood of the origin, then
$\,D_\oom(\Ov)=\,D_{\oom_0}(\Ov)\,,$ with \emph{equivalent norms}.\par%
\label{eqnormes}
\end{proposition}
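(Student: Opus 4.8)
The plan is to show the two set inclusions together with the two-sided norm estimate, working entirely in the neighborhood of the origin where \eqref{pertinho} holds; outside that neighborhood the extra $\|f\|$ term in the norm \eqref{omnorm} absorbs everything, as already remarked after \eqref{omnorm}. First I would fix a radius $R_0>0$ such that $0<k_0\le \oom(r)/\oom_0(r)\le k_1$ for $0<r<R_0$, and note that by the equivalence of norms defined with different values of $R$ I may assume the semi-norms in \eqref{fom} are computed with this $R_0$.

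Next, the core of the argument: for any $f\in C(\Ov)$ and any $0<r<R_0$, dividing the defining quantity $\om_f(r)$ by $\oom(r)$ versus $\oom_0(r)$ differs only by the bounded factor $\oom_0(r)/\oom(r)\in[1/k_1,1/k_0]$. Concretely,
\begin{equation}
\frac{\om_f(r)}{\oom(r)}=\frac{\om_f(r)}{\oom_0(r)}\cdot\frac{\oom_0(r)}{\oom(r)}\le \frac{1}{k_0}\,\frac{\om_f(r)}{\oom_0(r)}\,,
\label{pippo}
\end{equation}
and symmetrically with $k_1$ from below. Taking the supremum over $0<r<R_0$ on both sides of \eqref{pippo} gives $[f]_{\oom,R_0}\le k_0^{-1}[f]_{\oom_0,R_0}$, and the reversed estimate $[f]_{\oom_0,R_0}\le k_1[f]_{\oom,R_0}$ comes from the lower bound in \eqref{pertinho}. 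In particular $[f]_\oom<\infty$ if and only if $[f]_{\oom_0}<\infty$, which is exactly $D_\oom(\Ov)=D_{\oom_0}(\Ov)$ as sets. Adding $\|f\|$ to both sides then yields $\|f\|_\oom\le \max\{1,k_0^{-1}\}\,\|f\|_{\oom_0}$ and $\|f\|_{\oom_0}\le \max\{1,k_1\}\,\|f\|_\oom$, i.e. equivalence of the two norms.

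There is no real obstacle here; the only point requiring a word of care is the passage from an inequality between $R$-dependent semi-norms to one between the norms \eqref{omnorm} as originally defined (possibly with a different fixed $R$ than $R_0$), which is handled by invoking the already-stated fact that the norms for distinct values of $R$ are equivalent, so the chain of equivalences composes. I would also remark that one does not even need $\oom,\oom_0$ to be concave or differentiable for this statement — only the three standing monotonicity/vanishing conditions plus \eqref{pertinho} — so the proposition applies to all the examples (Hölder, Log, Hölog spaces) discussed in the introduction.
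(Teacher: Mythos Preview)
Your proof is correct and is precisely the ``immediate'' argument the paper has in mind (the paper's entire proof reads ``The proof is immediate''). Your write-up simply spells out the one-line comparison \eqref{pippo} and the reduction to a common $R_0$, both of which are routine.
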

The proof is immediate.
\begin{lemma}
If $\,\|f_n\|_\oom \leq\,C_0\,,$ and $\,f_n \rightarrow \,f\,$ in
$\,C(\Ov)\,$ then $\,\|f\|_\oom \leq\,C_0\,.$%
\label{umlemmas}
\end{lemma}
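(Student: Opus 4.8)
The plan is to deduce the bound $[f]_\oom \leq C_0$ from the corresponding bounds on the $f_n$ by a pointwise passage to the limit in the quantities entering the semi-norm. First I would observe that for fixed $x,y \in \Om$ with $0<|x-y|\leq r$, the map $g \mapsto |g(x)-g(y)|$ is continuous with respect to convergence in $C(\Ov)$ (indeed it is $2$-Lipschitz in the sup norm), so $|f_n(x)-f_n(y)| \to |f(x)-f(y)|$. Since $|f_n(x)-f_n(y)| \leq \om_{f_n}(r) \leq [f_n]_\oom\,\oom(r) \leq \|f_n\|_\oom\,\oom(r) \leq C_0\,\oom(r)$ for every $n$, letting $n\to\infty$ gives $|f(x)-f(y)| \leq C_0\,\oom(r)$.

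Next I would take the supremum over all admissible pairs $x,y$ with $0<|x-y|\leq r$: this yields $\om_f(r) \leq C_0\,\oom(r)$ for every $r\in(0,R)$, hence $\om_f(r)/\oom(r)\leq C_0$ and therefore $[f]_\oom = \sup_{0<r<R}\om_f(r)/\oom(r) \leq C_0$. Here I am using that $\oom(r)>0$ for $r>0$, which is part of the standing assumptions on oscillation functions, so the division is legitimate. Note also that $f\in C(\Ov)$ automatically, being the uniform limit of continuous functions, so $f$ is a genuine element of $D_\oom(\Ov)$ once we know $[f]_\oom<\infty$; in particular $\|f\|_\oom = [f]_\oom + \|f\| \leq C_0$ would also follow if one additionally tracks $\|f_n\|\to\|f\|$, though the statement as written only asks for the semi-norm-type bound $\|f\|_\oom\leq C_0$, which is exactly $[f]_\oom+\|f\|\leq C_0$; combining $\|f_n\|\to\|f\|$ with $[f_n]_\oom+\|f_n\|\leq C_0$ and the just-proved $\liminf$ argument applied to the sum gives this.

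There is no real obstacle here: the argument is an elementary interchange of a supremum with a limit, made legitimate by the fact that we only need a one-sided inequality (the limit of an upper bound is an upper bound), so no uniformity in $(x,y,r)$ is required. The only point to be slightly careful about is that the supremum defining $[f]_\oom$ ranges over the continuum of all $r\in(0,R)$ and all pairs $x,y$; but since the bound $|f(x)-f(y)|\leq C_0\,\oom(r)$ is established for each individual admissible triple before any supremum is taken, passing to the supremum afterwards is immediate and preserves the inequality.
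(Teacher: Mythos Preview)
Your argument is correct and is exactly the standard lower-semicontinuity argument that the paper has in mind when it writes ``The proof is immediate.'' The only slightly roundabout aspect is that you first derive $[f]_\oom \leq C_0$ from the crude bound $[f_n]_\oom \leq \|f_n\|_\oom \leq C_0$ and then have to go back and sharpen it; it is cleaner to run the pointwise-limit step directly with the inequality $|f_n(x)-f_n(y)| \leq (C_0 - \|f_n\|)\,\oom(r)$, pass to the limit using $\|f_n\|\to\|f\|$, and conclude $[f]_\oom \leq C_0 - \|f\|$ in one stroke. But this is a matter of presentation, not substance.
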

The proof is immediate.
\begin{theorem}
$D_\oom(\Ov)$ is a Banach space.%
\label{completos}
\end{theorem}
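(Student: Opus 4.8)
The plan is to show completeness directly from the definition of the norm $\|\cdot\|_\oom = [\cdot]_\oom + \|\cdot\|$, using that $C(\Ov)$ is already complete together with Lemma~\ref{umlemmas} to transfer a bound on seminorms to the limit.

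\medskip

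\noindent\textbf{Plan.} Let $\{f_n\}$ be a Cauchy sequence in $D_\oom(\Ov)$. Since $\|f_n - f_m\| \leq \|f_n - f_m\|_\oom$, the sequence is Cauchy in $C(\Ov)$, so there exists $f \in C(\Ov)$ with $f_n \to f$ uniformly. First I would check that $f \in D_\oom(\Ov)$: a Cauchy sequence is bounded, say $\|f_n\|_\oom \leq C_0$ for all $n$, and then Lemma~\ref{umlemmas} gives $\|f\|_\oom \leq C_0 < \infty$, hence $f \in D_\oom(\Ov)$. It remains to prove $\|f_n - f\|_\oom \to 0$. Fix $\ep > 0$ and pick $N$ so that $\|f_n - f_m\|_\oom < \ep$ for all $n,m \geq N$. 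For such $n,m$ one has in particular $[\,f_n - f_m\,]_\oom < \ep$, i.e. $\om_{f_n - f_m}(r) \leq \ep\,\oom(r)$ for every $r \in (0,R)$, which by \eqref{cinco} reads
\begin{equation}
|(f_n - f_m)(x) - (f_n - f_m)(y)| \leq \ep\,\oom(r)
\label{cauchyom}
\end{equation}
for all $x,y \in \Om$ with $0 < |x-y| \leq r$. Now hold $n \geq N$, $x,y$ and $r$ fixed, and let $m \to \infty$ in \eqref{cauchyom}; since $f_m \to f$ pointwise, the left-hand side converges to $|(f_n - f)(x) - (f_n - f)(y)|$, so $\om_{f_n - f}(r) \leq \ep\,\oom(r)$ for all $r \in (0,R)$, whence $[\,f_n - f\,]_\oom \leq \ep$. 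Combining with $\|f_n - f\| \leq \ep$ (again by letting $m \to \infty$ in the uniform Cauchy estimate, or directly from uniform convergence) yields $\|f_n - f\|_\oom \leq 2\ep$ for all $n \geq N$. Hence $f_n \to f$ in $D_\oom(\Ov)$ and the space is complete.

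\medskip

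\noindent\textbf{Main obstacle.} There is essentially no deep obstacle here; the only point requiring a little care is the passage to the limit $m \to \infty$ inside the supremum defining $\om_{f_n - f_m}(r)$. One cannot interchange $\lim_m$ and $\sup_{x,y}$ naively, but this is circumvented by the standard device of fixing $x,y,r$ \emph{first}, taking the limit in the pointwise inequality \eqref{cauchyom}, and only \emph{then} taking the supremum over $x,y$ with $|x-y| \leq r$ — the bound $\ep\,\oom(r)$ being uniform in $x,y$ survives the limit. Alternatively one can invoke Lemma~\ref{umlemmas} applied to the sequence $g_m = f_n - f_m$ (with $n$ fixed), which converges in $C(\Ov)$ to $f_n - f$ and satisfies $\|g_m\|_\oom \leq \ep$ for $m \geq N$, giving $\|f_n - f\|_\oom \leq \ep$ at once. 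Either way the argument is routine, and the three standing assumptions on $\oom$ (continuity, monotonicity, $\oom(0)=0$) are not even needed for this particular statement.
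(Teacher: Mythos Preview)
Your proof is correct and follows essentially the same route as the paper: pass to the uniform limit $f$ via completeness of $C(\Ov)$, then for fixed $n$ freeze $x,y$ (with $|x-y|=r$), let $m\to\infty$ in the pointwise inequality $|(f_n-f_m)(x)-(f_n-f_m)(y)|\leq [f_n-f_m]_\oom\,\oom(r)$, and only afterwards take the supremum to obtain $[f_n-f]_\oom\leq \limsup_m[f_n-f_m]_\oom$. Your presentation is in fact slightly more explicit than the paper's (you spell out the $\ep$--$N$ bookkeeping and invoke Lemma~\ref{umlemmas} for $f\in D_\oom(\Ov)$), but the underlying argument is identical.
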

\begin{proof}
Let $f_n$ be a Cauchy sequence in $D_\oom(\Ov)\,.$ It follows, in
particular, that $\,f_n \rightarrow \,f\,$ in $\,C(\Ov)\,,$ where
$\, f \in \,D_\oom(\Ov)\,.$ On the other hand, for $\,|x-\,y|=\,r\,$,
$$
\begin{array}{l}
\frac{|\,(f(x)-\,f_n(x)\,)
-(f(y)-\,f_n(y)\,)\,}{\oom(r)}=\\%
\\
\lim_{m \rightarrow \,\infty}\, \frac{|\,(f_m(x)-\,f_n(x)\,)
-(f_m(y)-\,f_n(y)\,)\,}{\oom(r)}\leq\,\limsup_{m \rightarrow
\,\infty} \,[\,f_m-\,f_n\,]_\oom\,.%
\end{array}
$$
Hence
$$
[\,f-\,f_n\,]_\oom \leq\,\limsup_{m \rightarrow
\,\infty} \,[\,f_m-\,f_n\,]_\oom\,.%
$$
From the Cauchy sequence hypothesis it readily follows that
$$
\lim_{n \rightarrow
\,\infty} \,[\,f-\,f_n\,]_\oom=\,0\,.%
$$
\end{proof}
Next we consider compact embedding properties. In the sequel, $ \oom
\,<<\, \oom_1 $ mean that
\begin{equation}
\lim_{r\rightarrow\,0}\,\frac{\oom(r)}{\oom_1(r)}=\,0\,.%
\label{pactos}
\end{equation}
\begin{theorem}
Assume that $ \oom << \oom_1\,.$ Then the embedding
$$
D_\oom(\Ov)\,\subset\,D_{\oom_1}(\Ov)\,,
$$%
is compact.%
\label{compactos}
\end{theorem}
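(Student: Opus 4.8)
The plan is to deduce compactness from the classical Arzel\`a--Ascoli theorem together with the hypothesis $\,\oom<<\oom_1\,,$ by splitting the supremum that defines $\,[\,\cdot\,]_{\oom_1}\,$ into a part near the origin, controlled by $\,\oom/\oom_1\,,$ and a part bounded away from the origin, controlled by uniform convergence in $\,C(\Ov)\,.$ Throughout one may assume, after Proposition~\ref{eqnormes} and the remark following \eqref{omnorm}, that $\,\oom\,$ and $\,\oom_1\,$ are defined on a common interval $\,[0,\,R)\,.$

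First I would record the continuity of the embedding. Since $\,\oom(r)/\oom_1(r)\to 0\,$ as $\,r\to 0\,$ and $\,\oom_1\,$ is non-decreasing with $\,\oom_1(\de)>0\,$ for each $\,\de>0\,,$ the ratio $\,\oom(r)/\oom_1(r)\,$ is bounded on $\,(0,\,R)\,;$ call its supremum $\,M\,.$ Then $\,\om_f(r)\le [f]_\oom\,\oom(r)\le M\,[f]_\oom\,\oom_1(r)\,$ for all $\,r\in(0,\,R)\,,$ whence $\,[f]_{\oom_1}\le M\,[f]_\oom\,$ and $\,D_\oom(\Ov)\subset D_{\oom_1}(\Ov)\,$ continuously. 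Next, let $\,f_n\,$ satisfy $\,\|f_n\|_\oom\le C_0\,.$ From $\,|f_n(x)-f_n(y)|\le \om_{f_n}(|x-y|)\le C_0\,\oom(|x-y|)\,$ and the continuity of $\,\oom\,$ with $\,\oom(0)=0\,,$ the family $\,\{f_n\}\,$ is equibounded and equicontinuous on $\,\Ov\,;$ by Arzel\`a--Ascoli some subsequence, still denoted $\,f_n\,,$ converges in $\,C(\Ov)\,$ to a limit $\,f\,,$ and $\,f\in D_\oom(\Ov)\,$ with $\,\|f\|_\oom\le C_0\,$ by Lemma~\ref{umlemmas}. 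Put $\,g_n=f_n-f\,,$ so $\,\|g_n\|\to 0\,$ and $\,\|g_n\|_\oom\le 2C_0\,.$ It remains to prove $\,[g_n]_{\oom_1}\to 0\,.$ Fix $\,\ep>0\,.$ Using $\,\om_{g_n}(r)\le 2C_0\,\oom(r)\,$ and $\,\oom(r)/\oom_1(r)\to 0\,,$ pick $\,\de\in(0,\,R)\,$ with $\,\oom(r)/\oom_1(r)<\ep/(2C_0)\,$ for $\,0<r\le\de\,;$ then $\,\sup_{0<r\le\de}\om_{g_n}(r)/\oom_1(r)<\ep\,$ for every $\,n\,.$ For $\,\de<r<R\,$ one has $\,\om_{g_n}(r)\le 2\|g_n\|\,$ and $\,\oom_1(r)\ge\oom_1(\de)>0\,,$ so $\,\sup_{\de<r<R}\om_{g_n}(r)/\oom_1(r)\le 2\|g_n\|/\oom_1(\de)\,,$ which is $\,<\ep\,$ once $\,n\,$ is large. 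Hence $\,[g_n]_{\oom_1}<\ep\,$ for $\,n\,$ large, i.e. $\,f_n\to f\,$ in $\,D_{\oom_1}(\Ov)\,,$ which gives the asserted compactness.

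I do not expect a genuinely hard step; the only point requiring care is the order of quantifiers in the last argument. The threshold $\,\de\,$ must be chosen \emph{first}, depending only on $\,\ep\,$ and $\,C_0\,$ — the hypothesis $\,\oom<<\oom_1\,$ being useless on $\,[\de,\,R)\,$ — and only afterwards may one let $\,n\to\infty\,$ to absorb the contribution of the range $\,\de<r<R\,$ via $\,\|g_n\|\to 0\,.$
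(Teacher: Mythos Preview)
Your argument is correct and follows essentially the same route as the paper: Arzel\`a--Ascoli for precompactness in $C(\Ov)$, Lemma~\ref{umlemmas} to place the limit in $D_\oom(\Ov)$, then the same splitting of the supremum in $[\,\cdot\,]_{\oom_1}$ into a small-$r$ part controlled by $\oom/\oom_1\to 0$ and a large-$r$ part controlled by $\|g_n\|\to 0$. The paper simply assumes $f=0$ in place of your $g_n=f_n-f$, and your remark on the order of quantifiers is exactly the point the paper handles when choosing $R_0(\ep)$ before $N(\ep)$.
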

\begin{proof}
By assumption
$$
\|\,f_n\,\|_\oom =\,[\,f_n\,]_\oom +\,\|\,f_n\,\| \leq\,C_0, \quad \forall \, n\,.
$$
From \eqref{pactos} it follows that $\oom(r) \leq \,\oom_1(r)\,$ for
$\,r\in\,(\,0,\,R_0)\,,$ for some $\,R_0>\,0\,$.  For $\,r \in (R_0,\,R)$  one has
$\,\oom(r)\leq\frac{\oom(R)}{\oom_1(R_0)}\,\oom_1(r)\,.$ So there is a
positive constant $\,C\,$ such that
$$
\oom(r)\leq \,C\,\oom_1(r)\,,\quad \forall \,r\in\,(0,\,R)\,.
$$
By the Ascoli-Arzela Theorem, the embedding%
$$
D_\oom(\Ov)\subset\,C(\Ov)
$$
is compact. Hence, by appealing to lemma \ref{umlemmas}, one shows that there is a
subsequence, still denoted $\,f_n\,$, which converges uniformly to
some $\,f\in\,D_\oom(\Ov)\,.$ Without loss of generality, we
assume that $\,f=\,0\,.$\par%
Let $\,|\,x-\,y\,|=\,r\,.$ One has
$$
\frac{|\,f_n(x)-\,f_n(y)\,|}{\oom_1(r)}=\,\frac{|\,f_n(x)-\,f_n(y)\,|}{\oom(r)}\,\frac{\oom(r)}{\oom_1(r)}\,,
 \quad \forall \, n\,.
$$
Given $\,\ep>\,0\,,$ it follows from \eqref{pactos} that there is
$\,R_0(\ep) >\,0\,$ such that
\begin{equation}
0<\,r\leq\,R_0(\ep)\, \implies   \frac{\oom(r)}{\oom_1(r)}<\,\ep\,. %
\label{evon}
\end{equation}
Hence, for $\,0<\,|x-\,y|\leq\,R_0(\ep)\,,$
\begin{equation}
\frac{|\,f_n(x)-\,f_n(y)\,|}{\oom_1(r)} \leq\,C_0\,\ep\,,\quad  \forall \,n\,.%
\label{zed}
\end{equation}
 On the other hand, if
$\,r \in (\,R_0(\ep),\,R)\,,$ one has
$$
\frac{|\,f_n(x)-\,f_n(y)\,|}{\oom_1(r)} \leq
\,\frac{2}{\oom_1(R_0(\ep))}\,\|f_n\|\,.
$$
Since the sequence $\,\|f_n\|\,$ converges to zero, there is an index $\,N(\ep)$
such that, for each $\,n>\,N(\ep)\,,$ the right hand side of the last inequality is smaller than
$\ep\,.$ This fact, together with
\eqref{zed}, shows that \eqref{zed}  holds for $\,0<\,|x-\,y|\leq\,R\,$ and
 $\,n>\,N(\ep)\,$ (increase the constant $\,C_0\,,$  if necessary). So,
$$
\lim_{n\rightarrow\,+\,\infty} \,[\,f_n\,]_{\oom}=\,0\,.
$$
\end{proof}
\begin{lemma}
Assume that $\,\oom\,$ is concave. Then
\begin{equation}
\oom(k\,r) \leq\,k\,\oom(r)\,, \quad \forall \,k\geq\,1\,.%
\label{concavo}
\end{equation}
\label{cocas}
\end{lemma}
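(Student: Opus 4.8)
The plan is to reduce \eqref{concavo} to the elementary fact that a concave function vanishing at the origin lies above each chord issued from the origin, equivalently that $r\mapsto \oom(r)/r$ is non-increasing.

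First I would fix, by Definition~\ref{finacas}, a number $\de>0$ such that $\oom$ is concave on $[0,\de)$, and observe that it is enough to establish \eqref{concavo} when $0<kr<\de$. Indeed, by the standing remark that every structural hypothesis on $\oom$ is only needed in an arbitrarily small neighborhood of the origin, the behaviour of $\oom$ for larger arguments is irrelevant in the sequel; and in any case one may replace $\oom$ outside $[0,\de)$ by its linear continuation with left-hand slope $\oom'_{-}(\de)\ge 0$, obtaining a function that is concave, non-decreasing and zero at $0$ on all of $[0,R)$ and that agrees with $\oom$ near the origin, hence is equivalent to $\oom$ in the sense of Proposition~\ref{eqnormes}. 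Running the argument on this extension transfers the conclusion back wherever it is used.

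For the core step, let $r>0$ and $k\ge 1$ with $kr<\de$; since $k\ge1$ we have $r\le kr<\de$, so both $kr$ and $0$ lie in the concavity interval. Write $r$ as the convex combination
$$
r=\frac1k\,(kr)+\Bigl(1-\frac1k\Bigr)\cdot 0 .
$$
Concavity of $\oom$ together with $\oom(0)=0$ then gives
$$
\oom(r)\ \ge\ \frac1k\,\oom(kr)+\Bigl(1-\frac1k\Bigr)\oom(0)\ =\ \frac1k\,\oom(kr),
$$
and multiplying by $k$ yields $\oom(kr)\le k\,\oom(r)$, which is \eqref{concavo}.

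I do not expect a real obstacle: the computation is one line. The only point deserving a word of care is that concavity is assumed merely locally, so one must either confine the statement to arguments inside the concavity neighbourhood (which is all that the later theorems require) or first pass to the concave extension described above before applying the convex-combination identity.
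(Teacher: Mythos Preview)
Your proof is correct and is precisely the standard one-line argument the paper has in mind; the paper itself records no proof beyond ``The proof is immediate,'' so your convex-combination computation using $r=\tfrac1k(kr)+(1-\tfrac1k)\cdot 0$ and $\oom(0)=0$ is exactly the intended route. Your remarks on localizing to the concavity neighbourhood are appropriate and consistent with the paper's Definition~\ref{finacas}.
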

The proof is immediate.\par%
In reference \cite{BV-arxiv}, Theorem 4.4, we claimed that
$\,C^{\infty}(\Ov)\,$ is dense in Log spaces, leaving the proof to
the reader. This result is wrong, as shown below. It is worth noting
that $\,C^{\infty}(\Ov)\,$ is dense in $\,C_*(\Ov)\,,$ a result that
has a central role in reference \cite{BVSTOKES}.
\begin{theorem}
Assume that $\,\oom(r)\,$ is concave and that
$\,\oom_1(r)<<\,\oom(r)\,.$ Then $\,D_{\oom_1}(\Ov)\,$ is not dense
in $\, D_\oom(\Ov)\,.$%
\label{osned}
\end{theorem}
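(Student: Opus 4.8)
The plan is to exhibit explicitly a function $\,f \in D_\oom(\Ov)\,$ and a positive distance $\,\de>0\,$ such that $\,[f-g]_\oom \geq \de\,$ for every $\,g \in D_{\oom_1}(\Ov)\,$, which shows $\,D_{\oom_1}(\Ov)\,$ is not dense in $\,D_\oom(\Ov)\,$. The natural candidate is a function whose local oscillation saturates the profile $\,\oom\,$, for instance one built along a single ray. Fix a point $\,x_0 \in \Om\,$ and a direction, and along the segment parametrized by $\,t \in [0,R)\,$ set $\,f(x_0 + t\,\be) = \oom(t)\,$ (extended to a genuine continuous function on $\,\Ov\,$ in any reasonable way, e.g.\ depending only on $\,|x-x_0|\,$ near $\,x_0\,$ and cut off away from $\,x_0\,$). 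Since $\,\oom\,$ is concave and non-decreasing, Lemma \ref{cocas} gives $\,\oom(kr)\leq k\,\oom(r)\,$, and concavity also yields $\,\oom(r)-\oom(s)\leq \oom(r-s)\,$ for $\,0\le s\le r\,$; hence $\,\om_f(r)\asymp \oom(r)\,$ and $\,f \in D_\oom(\Ov)\,$ with $\,[f]_\oom\,$ comparable to $\,1\,$.

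Next I would argue that $\,f\,$ cannot be well approximated by elements of $\,D_{\oom_1}(\Ov)\,$. Suppose $\,g \in D_{\oom_1}(\Ov)\,$. Then $\,\om_g(r) \leq [g]_{\oom_1}\,\oom_1(r)\,$, so by the hypothesis $\,\oom_1 << \oom\,$ (i.e.\ \eqref{pactos}) the ratio $\,\om_g(r)/\oom(r)\to 0\,$ as $\,r\to 0\,$. On the other hand, along the chosen ray, for $\,x = x_0 + r\be\,$ and $\,y = x_0\,$ we have $\,|f(x)-f(y)| = \oom(r)\,$, and hence
$$
\om_{f-g}(r) \;\geq\; |f(x)-f(y)| - |g(x)-g(y)| \;\geq\; \oom(r) - \om_g(r)\,.
$$
Dividing by $\,\oom(r)\,$ and letting $\,r \to 0\,$,
$$
[f-g]_\oom \;\geq\; \limsup_{r\to 0}\,\frac{\om_{f-g}(r)}{\oom(r)} \;\geq\; 1 - \limsup_{r\to 0}\,\frac{\om_g(r)}{\oom(r)} \;=\; 1\,.
$$
Thus every $\,g \in D_{\oom_1}(\Ov)\,$ satisfies $\,\|f-g\|_\oom \geq [f-g]_\oom \geq 1\,$, so $\,f\,$ lies at distance at least $\,1\,$ from $\,D_{\oom_1}(\Ov)\,$, which is therefore not dense.

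The routine part is the construction of the ancillary function $\,f\,$ as a bona fide element of $\,C(\Ov)\,$ with $\,\om_f(r)\,$ genuinely comparable to $\,\oom(r)\,$ for small $\,r\,$ — one wants the supremum in \eqref{cinco} to be attained (up to constants) by pairs lying on the distinguished ray, which is where concavity of $\,\oom\,$ is used, together with the fact that away from $\,x_0\,$ the function can be taken Lipschitz so it does not interfere with the small-scale behaviour. The genuinely delicate point, and the one I would be most careful about, is making sure the lower bound $\,|f(x)-f(y)| = \oom(|x-y|)\,$ really holds for a family of pairs with $\,|x-y|\,$ ranging over an interval shrinking to $\,0\,$, so that the $\,\limsup\,$ above is legitimately $\,\geq 1\,$; a one-dimensional radial profile makes this transparent. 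No compactness or Baire-category machinery is needed: a single explicit obstruction function suffices.
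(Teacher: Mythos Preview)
Your proposal is correct and follows essentially the same approach as the paper: define $f(x)=\oom(|x-x_0|)$ (the paper takes $x_0=0$), use concavity to ensure $f\in D_\oom$, and then for any $g\in D_{\oom_1}$ compare $f(x)-f(x_0)=\oom(|x-x_0|)$ against $g(x)-g(x_0)=o(\oom(|x-x_0|))$ to conclude $[f-g]_\oom\geq 1$. The paper's write-up is slightly more compact (it works directly with the identity $|(f(x)-g(x))-(f(0)-g(0))|/\oom(|x|)=|1-(g(x)-g(0))/\oom(|x|)|$ rather than passing through $\om_{f-g}$), but the argument is the same.
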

\begin{proof}
We assume that the origin belongs to $\,\Om\,,$ and argue in a
neighborhood $\,I=\,I(0,\,\de) \subset \Om\,.$ Define $\,f\,$ by
setting $\,f(x)=\,\oom(|x|)\,.$ We show that
$\,[\,f-\,g]_{\,\oom}\geq\,1\,,$ for each $\,g \in
D_{\oom_1}(\Ov)\,.$ It is sufficient to consider the one-dimensional
case. One has
$$
\frac{|\,(f(x)-\,g(x)\,)-\,(\,f(0)-\,g(0)\,)\,|}{\oom(|x|)}=\,\Big|\,1-\,\frac{\,g(x)-\,g(0)\,}{\oom(|x|)}\,\Big|\,.
$$
Hence $\,[\,f-\,g]_{\,\oom}\geq\,1\,$ follows, if we show that
\begin{equation}
\lim_{x\rightarrow\,0} \,\frac{\,g(x)-\,g(0)\,}{\oom(|x|)}=\,0\,.
\label{desejada}
\end{equation}
 Let's prove this last inequality. One has, as $\,x
\rightarrow\,0\,,$
\begin{equation}
\lim\,\frac{\,g(x)-\,g(0)\,}{\oom(|x|)}=\,\lim\,\frac{\,g(x)-\,g(0)\,}{\oom_1(|x|)}
\cdot\,\lim\,\frac{\oom_1(|x|)}{\oom(|x|)}=\,0\,.%
\label{liminfas}
\end{equation}
\end{proof}

\vspace{0.2cm}

Note that in the above proof we did not explicitly appeal to the
concavity assumption. This assumption was introduced merely to
guarantee that $\,f(x)=\,\oom(|x|)\,$ belongs to $\, D_\oom\,$ in a
neighborhood of the origin. This holds if
\begin{equation}
\oom(s)\leq\,\oom(r)+\,c\,\oom(s-r)\,, \quad \textrm{for} \quad
\,0<\,r<\,s<\,\ro\,,%
\label{regdens}
\end{equation}
for some constant $c\geq\,1\,,$ and some $\,\ro>\,0\,.$
By lemma \ref{cocas}, concave oscillation functions  satisfy \eqref{regdens} with $c=\,1\,.$\par%
The above result shows, in particular, that $\,C^{0,\,\mu}(\Ov)\,$
is not dense in  $\,C^{0,\,\la}(\Ov)\,$ for $1\geq\,\mu> \la
>\,0\,.$ In particular $Lip\,(\Ov)\,$, hence $\,C^1(\Ov)\,$, is not
dense in  $\,C^{0,\,\la}(\Ov)\,$.

\vspace{0.2cm}

We end this section by stating an extension theorem, where $
\,\Om_{\de}\equiv\,\{\,x:\, dist(x,\,\Om\,) <\,\de\,\}\,.$
\begin{theorem}
Assume that $\,\Om\,$ is convex or, alternatively, that
$\,\oom(r)\,$ is concave (concavity may be replaced by  condition
\eqref{comka}) . Then there is a $\,\de>0\,$ such that the
following holds. There is a linear continuous map $\,T\,$ from
$\,C(\Ov)\,$ to $\,C(\Ov_{\de})\,,$ and from $\,D_\oom(\Ov)\,$ to
$\,D_\oom(\Ov_{\de})\,,$ such that
$\,T\,f(x)=\,f(x)\,$, for each $\,x \in\,\Ov\,.$%
\label{bahois}
\end{theorem}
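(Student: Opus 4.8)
The plan is to realize $\,T\,$ as composition with the metric (nearest–point) projection onto $\,\Ov\,.$ For $\,x\,$ near $\,\Ov\,$ let $\,p(x)\in\,\Ov\,$ denote a point of least distance from $\,x\,.$ Two regimes must be distinguished. If $\,\Om\,$ is convex, then $\,p\,$ is well defined on all of $\,\R^n\,$ and is $\,1$-Lipschitz; this is classical and is all that is needed in that case (one may even take $\,\Om_\de=\,\R^n\,$). If $\,\Om\,$ is not convex, global uniqueness of $\,p\,$ fails, but since $\,\Ga\,$ is of class $\,C^{2,\,\la}\,$ (in particular $\,C^{1,1}\,$) the set $\,\Ov\,$ has positive reach: there is $\,\de_0>\,0\,$ so that every $\,x\,$ with $\,dist(x,\,\Om)<\,\de_0\,$ has a \emph{unique} nearest point $\,p(x)\in\,\Ov\,,$ and for each fixed $\,\de<\,\de_0\,$ the map $\,p\,$ is Lipschitz on $\,\Ov_\de\,$ with a finite constant $\,L=\,L(\de)\geq\,1\,$ (which one may take arbitrarily close to $\,1\,$ by shrinking $\,\de\,$). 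This can be invoked from Federer's theory of sets of positive reach, or proved directly by straightening $\,\Ga\,$ in local $\,C^{2,\,\la}\,$ charts; note $\,dist(x,\,\Om)=\,dist(x,\,\Ov)\,.$ Fix such a $\,\de\,$ and set $\,T\,f=\,f\circ\,p\,$ on $\,\Ov_\de\,.$

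The required properties then follow by direct inspection. Linearity of $\,T\,$ is clear, and $\,p(x)=\,x\,$ for $\,x\in\,\Ov\,$ gives $\,T\,f(x)=\,f(x)\,$ there. Since $\,p\,$ is continuous on $\,\Ov_\de\,$ and maps it onto $\,\Ov\,$ (its image lies in $\,\Ov\,$ and contains $\,\Ov\,$), one gets $\,T\,f\in\,C(\Ov_\de)\,$ with $\,\|\,T\,f\,\|_{C(\Ov_\de)}=\,\|\,f\,\|_{C(\Ov)}\,,$ so $\,T\,$ is continuous from $\,C(\Ov)\,$ into $\,C(\Ov_\de)\,$ with norm one. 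For the modulus estimate let $\,x,\,y\in\,\Ov_\de\,,$ $\,|x-\,y|=\,r\,.$ Because $\,\om_f\,$ is non-decreasing and $\,|p(x)-\,p(y)|\leq\,L\,r\,,$
$$
|\,T\,f(x)-\,T\,f(y)\,|=\,|\,f(p(x))-\,f(p(y))\,|\leq\,\om_f(L\,r)\,,
$$
hence $\,\om_{T\,f}(r)\leq\,\om_f(L\,r)\,.$ Choosing $\,R'>\,0\,$ with $\,L\,R'\,$ smaller than $\,R\,$ and smaller than the neighbourhood of the origin on which $\,\oom\,$ is concave, \eqref{fom2} and Lemma \ref{cocas} give, for $\,0<\,r<\,R'\,,$
$$
\om_{T\,f}(r)\leq\,\om_f(L\,r)\leq\,[\,f\,]_\oom\,\oom(L\,r)\leq\,L\,[\,f\,]_\oom\,\oom(r)\,,
$$
the last inequality being trivial ($\,L=\,1\,$) in the convex case. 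Thus $\,[\,T\,f\,]_{\oom,\,R'}\leq\,L\,[\,f\,]_\oom\,,$ and since the $\,D_\oom\,$ norms attached to distinct values of $\,R\,$ are equivalent, $\,T\,f\in\,D_\oom(\Ov_\de)\,$ and $\,T\,$ is continuous from $\,D_\oom(\Ov)\,$ into $\,D_\oom(\Ov_\de)\,.$

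The concavity of $\,\oom\,$ intervenes only through the single inequality $\,\oom(L\,r)\leq\,L\,\oom(r)\,$ with $\,L\geq\,1\,$; this is precisely where the alternative hypothesis \eqref{comka} can be used instead, since \eqref{comka} is tailored to furnish the same doubling-type bound $\,\oom(L\,r)\leq\,c\,\oom(r)\,.$ Consequently I expect the only step that is not entirely routine to be the Lipschitz continuity of the nearest–point projection in a one-sided $\,C^{2,\,\la}\,$ tubular neighbourhood of $\,\Ga\,$ — that is, the existence of $\,\de_0\,$ and $\,L(\de)\,$ above; everything else reduces to monotonicity of $\,\om_f\,$ and the fact that $\,p\,$ fixes $\,\Ov\,.$ An equivalent route, if one prefers to avoid projections, is a partition-of-unity construction: cover $\,\Ga\,$ by finitely many charts flattening the boundary, extend $\,f\,$ across each flat piece by the reflection $\,x_n\mapsto\,-x_n\,$ (which preserves moduli of continuity), and patch the local extensions together with a smooth partition of unity; the distortion of the charts plays there the role of the Lipschitz constant $\,L\,,$ and is again absorbed by Lemma \ref{cocas} or by \eqref{comka}.
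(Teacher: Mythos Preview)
Your proof via composition with the nearest-point projection $p:\Ov_\de\to\Ov$ is correct. The paper itself does not spell out a proof of this theorem; it only refers the reader to the argument of Theorem~2.3 in \cite{BVSTOKES} and to \cite{BV-arxiv}, so a line-by-line comparison is not possible from the present text. Your route is clean and self-contained: Federer's positive-reach theory for the $C^{2,\la}$ (hence $C^{1,1}$) boundary, or alternatively the convexity of $\Om$, furnishes a Lipschitz retraction $p$, after which the entire $D_\oom$ estimate collapses to the single inequality $\oom(Lr)\leq c\,\oom(r)$; this is exactly where concavity (Lemma~\ref{cocas}) or condition~\eqref{comka} enters, and in the convex case $L=1$ makes that step vacuous --- matching the dichotomy in the hypothesis precisely. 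The reflection--plus--partition-of-unity variant you sketch at the end is also valid and is perhaps closer in spirit to the chart-based constructions one would expect in the referenced works, but the projection argument has the advantage of producing the extension operator in one stroke, with operator norm equal to $1$ on $C(\Ov)$.
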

The proof follows by appealing to the argument used to prove the
Theorem 2.3 in \cite{BVSTOKES}. See reference \cite{BV-arxiv}. Note
that the classical proof of approximation of functions on compact
subsets of $\,\Om\,$ by appealing to mollification, does not work
here. Otherwise, the density property refused by theorem \ref{osned}
would hold.

\section{Spaces $\,D_{\ho}(\Ov)\,$ and regularity. The main theorems.}\label{domega}
In this section we state the theorems \ref{sufasvero} and
\ref{sufasvero-2}. From now on we assume that the modulus of
continuity $\,\oom(r)\,$ satisfy the condition
\begin{equation}
\int_0^R \,\oom(r) \,\frac{dr}{r}\,\leq\,C_R\,,%
\label{massim}
\end{equation}
for some constant $C_R\,.$  Assumption \eqref{massim} is equivalent
to the inclusion $\, D_\oom(\Ov)\subset\,C_*(\Ov)\,.$ This assumption is almost
necessary to obtain $\,\na^2\,u \in\, C(\Ov)\,.$\par%
We put each suitable oscillation function $\,\oom(r)\,$ in
correspondence with a unique, related oscillation function
$\,\ho(r)\,$ defined by setting $\ho(\,0)=\,0\,,$ and
\begin{equation}
\ho(\,r)=\,\int_0^r \,\oom(s) \,\frac{ds}{s}%
\label{chapeu}
\end{equation}
for $\,0<\,r\leq\,R\,.$ Hence, to a functional space
$\,D_\oom(\Ov)\,$ there corresponds a well defined functional space
$\,D_{\ho}(\Ov)\,.$ Obviously, $\,\ho\,$ satisfies all the
properties described in section \ref{novicas} for generical
oscillation functions. In particular, Banach spaces
\begin{equation}
D_\ho(\Ov) =\,\{\, f\in\,C(\Ov) :\, [f]_\ho <\,\infty\,\}
\label{defdom2}
\end{equation}
turn out to be well defined.\par%
Next we discuss some additional restrictions on the data spaces
$D_\oom(\Ov)$. We start by excluding $\,Lip(\Ov)\,$ as data space
since this \emph{singular} case, largely considered in literature,
is borderline. In fact, to assign $\,f \in\,Lip\,(\Ov)\,$ is
equivalent to assign $\,\na\,f\in\,L^\infty(\Om)\,,$ which is the
starting point of a new chapter. So, we impose the \emph{strict}
limitation
\begin{equation}
Lip(\Ov)\, \subset\, D_\oom(\Ov)\, \subset\,C_*(\Ov)\,.%
\label{namely}
\end{equation}
 Exclusion of $\,Lip(\Ov)\,$ means that $\,\oom(r)\,$ does not verify
$ \,\oom(r) \leq\,c\,r\,,$ for any positive constant $\,c\,.$ Hence
$\,\limsup (\,\oom(r)/\,r\,)=\,+\,\infty\,,$ as $\,r
\rightarrow\,0\,$. We simplify, by assuming that
\begin{equation}
\lim_{r \rightarrow\,0}\, \frac{\oom(r)}{r}=\,+\,\infty\,.%
\label{simplelas}
\end{equation}
In particular, the graph of $\,\oom(r)\,$ is tangent to the vertical
axis at the origin (as for H\"older and Log spaces). This picture
also shows that \emph{concavity} of the graph is here a quite
natural assumption. Concavity implies that left and right
derivatives are well defined, for $r>\,0\,$. By also taking into
account that $\,\oom(r)\,$ is non-decreasing, we realize that
pointwise differentiability of $\,\oom(r)\,,$ for $r>\,0\,$, is not
a particularly restrictive assumption. This last claim is reenforced
by the equivalence result for norms, under condition
\eqref{pertinho}. This equivalence allows regularization of
oscillation functions $\,\oom(r)\,,$ staying inside the same
original functional space $\,D_\oom(\Ov)\,.$ Summarizing,
\emph{differentiability} and \emph{concavity} (recall definition
\ref{finacas}) are natural assumptions
here.\par%
If $\,\oom(r)\,$ is concave, not flat, and differentiable, it follows that
\begin{equation}
\frac{\oom(r)}{r\,\,\oom'(r)}> \,1\,,%
\label{flitas}
\end{equation}%
for all $r>\,0\,.$ This justifies the assumption
\begin{equation}
\lim_{r\rightarrow 0}\,\frac{\oom(r)}{r\,\,\oom'(r)}=\,C_1 > \,1\,,%
\label{naflitas}
\end{equation}%
where $\,C_1=\,+\,\infty\,$ is admissible. Assumption \eqref{naflitas} is reenforced by the particular
situation in Lipschitz, H\"older, and Log cases. The
limit exists and is given by, respectively, $\,1\,$,
$\,\frac{1}{\la}\,,$ and $\,+\,\infty\,.$ As expected, the Lipschitz
case stays outside the admissible range. Note that, basically, the
larger is the space, the larger is the limit.\par%
The above consideration allow us to assume in theorems
\ref{sufasvero} and \ref{sufasvero-2} that oscillation functions
$\,\oom(r)\,$, are concave, differentiable, and satisfies conditions
\eqref{massim}, \eqref{simplelas}, and \eqref{naflitas}.\par%
Note that, due to a possible loss of regularity, it could happen
that a $\,D_{\ho}(\Ov)\,$ space  is not contained in $\,C_*(\Ov)\,,$
as happens in theorem \ref{laplolas}, if $\,1<\al<2\,.$ In other
words, $\,\ho(r)\,$ does not necessarily satisfy \eqref{massim}.\par
Next, we define the quantity
\begin{equation}
B(r)=:\frac{\,r\,\int_r^{R} \,\frac{\oom(s)}{s^2} \,ds}{\int_0^r
\,\frac{\oom(s)}{s}\,ds}\,.%
\label{eeles}
\end{equation}
\label{assasdois}%
The following result holds.
\begin{lemma}
Assume that $\,\oom(r)\,$ is concave and satisfies assumptions
\eqref{massim}, \eqref{simplelas} and \eqref{naflitas}. Then
\begin{equation}
\lim_{r\,\rightarrow \,0}\,B(r)=\,\frac{1}{C_1-\,1}\,.%
\label{peles}
\end{equation}
In particular there is a positive constant $\,C_2\,$ such that
\begin{equation}
B(r)\leq\,C_2\,%
\label{eeles-2}
\end{equation}
in some neighborhood of the origin.%
\label{beat}
\end{lemma}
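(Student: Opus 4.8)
The plan is to compute the limit of $B(r)$ by applying L'Hôpital's rule, since both numerator and denominator tend to $0$ as $r\to 0$ (the denominator by \eqref{massim}, and the numerator because $r\to 0$ while the integral $\int_r^R\om(s)s^{-2}\,ds$ may blow up, so this needs a little care—see below). Write the numerator as $N(r)=r\,J(r)$ with $J(r)=\int_r^R\om(s)s^{-2}\,ds$, and the denominator as $D(r)=\ho(r)=\int_0^r\om(s)s^{-1}\,ds$. Then $D'(r)=\om(r)/r$, and $N'(r)=J(r)+r\,J'(r)=J(r)-\om(r)/r$. Hence, provided the hypotheses of L'Hôpital are met,
\begin{equation}
\lim_{r\to 0}B(r)=\lim_{r\to 0}\frac{J(r)-\om(r)/r}{\om(r)/r}
=\lim_{r\to 0}\frac{r\,J(r)}{\om(r)}-1.
\label{bplan1}
\end{equation}
So the whole matter reduces to showing $\lim_{r\to 0} r\,J(r)/\om(r)=C_1/(C_1-1)$, which then gives $\lim B(r)=C_1/(C_1-1)-1=1/(C_1-1)$ as claimed.

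To evaluate $L:=\lim_{r\to 0} r\,J(r)/\om(r)$, I would again use L'Hôpital on the quotient $J(r)/(\om(r)/r)=r\,J(r)/\om(r)$. The derivative of the numerator $J(r)$ is $-\om(r)/r^2$; the derivative of the denominator $\om(r)/r$ is $(r\,\om'(r)-\om(r))/r^2$ (using pointwise differentiability of $\om$, which holds by the concavity/differentiability hypotheses). Therefore
\begin{equation}
L=\lim_{r\to 0}\frac{-\om(r)/r^2}{(r\,\om'(r)-\om(r))/r^2}
=\lim_{r\to 0}\frac{\om(r)}{\om(r)-r\,\om'(r)}
=\lim_{r\to 0}\frac{1}{\,1-\dfrac{r\,\om'(r)}{\om(r)}\,}
=\frac{1}{1-1/C_1}=\frac{C_1}{C_1-1},
\label{bplan2}
\end{equation}
where in the last step I used assumption \eqref{naflitas}, namely $r\,\om'(r)/\om(r)\to 1/C_1$. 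When $C_1=+\infty$ this reads $L=1$ and $\lim B(r)=1/(C_1-1)=0$, consistent with \eqref{peles}; the concavity bound \eqref{flitas} guarantees $r\,\om'(r)/\om(r)<1$ so no division-by-zero issue arises near the origin. Once the limit \eqref{peles} is established, the bound \eqref{eeles-2} in a neighborhood of the origin is immediate since $1/(C_1-1)$ is finite (it is $0$ when $C_1=+\infty$), so $B(r)$ is bounded near $0$ by, say, $2/(C_1-1)+1$.

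The main obstacle is justifying the two applications of L'Hôpital's rule, i.e.\ checking that the quotients are genuinely of indeterminate form $0/0$ or $\infty/\infty$ and that the derivative of the denominator is nonzero near $0$. For \eqref{bplan2}: the denominator $\om(r)/r\to+\infty$ by \eqref{simplelas}; the numerator $J(r)=\int_r^R\om(s)s^{-2}\,ds$—I need to confirm $J(r)\to+\infty$ as well, which follows because $\om(s)/s^2\geq c/s$ near $0$ (again by \eqref{simplelas}, since $\om(s)/s\to\infty$ means $\om(s)/s\geq 1$ for small $s$), so $J(r)\gtrsim\log(1/r)\to\infty$; thus it is an $\infty/\infty$ form. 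The derivative of the denominator, $(r\om'(r)-\om(r))/r^2$, is strictly negative near $0$ by \eqref{flitas}, hence nonzero. For the first application leading to \eqref{bplan1}: once we know $L$ is finite, $N(r)=r\,J(r)=L\,\om(r)(1+o(1))\to 0$ since $\om(r)\to0$, and $D(r)=\ho(r)\to 0$ by \eqref{massim}, with $D'(r)=\om(r)/r>0$; so it is a legitimate $0/0$ form with nonvanishing denominator derivative. A cleaner alternative that sidesteps one L'Hôpital step is to combine the two limits directly: writing $B(r)=\dfrac{rJ(r)}{\ho(r)}$ and noting $\ho(r)/\om(r)\to$ (by L'Hôpital) $\lim r/(r\om'(r)/\om(r)\cdot r/\om(r))^{-1}$—but in practice the two-step computation above is the most transparent route, and I would present it in that order, flagging the $\infty/\infty$ versus $0/0$ distinction at each use.
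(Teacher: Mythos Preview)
Your proof is correct and follows essentially the same strategy as the paper: apply L'H\^opital to $B(r)$ and then use assumption \eqref{naflitas} to evaluate the resulting limit. The only structural difference is that the paper first rewrites
\[
B(r)=\frac{J(r)}{g(r)},\qquad g(r):=\frac{1}{r}\int_0^r\frac{\oom(s)}{s}\,ds=\frac{\ho(r)}{r},
\]
shows $g(r)\to+\infty$ (via L'H\^opital and \eqref{simplelas}) and $g'(r)<0$ near $0$, and then applies the $*/\infty$ form of L'H\^opital directly; after one differentiation this yields $\oom(r)/(\ho(r)-\oom(r))$, and the limit $\ho(r)/\oom(r)\to C_1$ finishes the computation. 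Your version keeps $B(r)=rJ(r)/\ho(r)$ as a $0/0$ form, which forces you to first compute $L=\lim rJ(r)/\oom(r)$ in order to justify $rJ(r)\to 0$ before the first L'H\^opital step---logically sound but slightly circuitous compared to the paper's route, which avoids that bootstrap by pushing the factor $r$ into the denominator.
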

\begin{proof}
By appealing to  \eqref{massim},  \eqref{simplelas} and to a de
L'H\^opital's rule one shows that
\begin{equation}
\lim_{r\,\rightarrow \,0} \,\frac1r \,\int_0^{r} \,\frac{\oom(s)}{s}
\,ds =\,+\,\infty\,.%
\label{lecas}
\end{equation}
On the other hand
\begin{equation}
\lim_{r\,\rightarrow \,0}\,B(r)= \,\lim_{r\,\rightarrow \,0}
\frac{\int_r^{R} \,\frac{\oom(s)}{s^2} \,ds}{\frac1r \,\int_0^r
\,\frac{\oom(s)}{s}\,ds}\,.%
\label{doente}
\end{equation}
Equation \eqref{lecas} shows that the denominator $\,g(r)\,$ of the
fraction in the right hand side of \eqref{doente} goes to
$\,+\,\infty\,$ as $\,r\,$ goes to zero. Furthermore its derivative
$$
g'(r)=\, \frac{1}{r^2}\,\Big(\,\oom(r) -\,\int_0^r
\,\frac{\oom(s)}{s}\,ds\,\Big)%
$$
is strictly negative for positive $\,r\,$ in a neighborhood of the
origin, as follows from the inequality $\,\oom(r) -\,\int_0^r
\,\frac{\oom(s)}{s}\,ds\,<\,0\,,$ for $\,r>\,0\,.$ Let's show this
last inequality. Since the left hand side of the inequality goes to
zero with $\,r\,,$ it is sufficient to show that its derivative is
strictly negative for $\,r>\,0\,.$ This follows easily by appealing
to \eqref{naflitas}. The above results allow us to apply to the
limit \eqref{doente} one of the well known forms of de L'H\^opital's
rule. Straightforward calculations, together with \eqref{naflitas},
show \eqref{peles}.
\end{proof}
Next we state our main results, theorems \ref{sufasvero} and
\ref{sufasvero-2}. In the first theorem constant coefficients are
assumed.
\begin{theorem}
Assume that the oscillation function $\,\oom(r)\,$, concave and
differentiable, satisfies conditions \eqref{massim},
\eqref{simplelas}, and \eqref{naflitas}. Define
$\,{\ho}(r)\,$  by \eqref{chapeu}. Let $\Om_0 \subset \subset \,\Om\,$,
 $\,f \in D_{\oom}(\Ov)\,,$ and $\,u\,$ be the solution of problem \eqref{lapnao},
 where the operator coefficients are constant. Then $\,\na^2\,u \in \,D_{\ho}(\Om_0)\,$ and
\begin{equation}
\|\,\na^2\,u\,\|_{\,\ho,\,\Om_0} \leq \,C\,\|\,f\,\|_{\oom}\,,%
\label{hahega}
\end{equation}
for some positive constant $\,C= C(\Om_0,\,\Om)\,.$  The result is
optimal in the sharp sense defined in section \ref{optimus} .
Furthermore, the above regularity holds up to flat boundary points.
\label{sufasvero}
\end{theorem}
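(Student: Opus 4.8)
The plan is to reduce the statement to an interior estimate for a Newtonian potential, and then to extract the decisive kernel bound from Lemma~\ref{beat}. Because the coefficients are constant, a fixed affine change of variables $x\mapsto Ax$ turns $\bL$ into $\De$; since $\oom$ is concave, Lemma~\ref{cocas} gives $\oom(\|A\|r)\le C\,\oom(r)$ near the origin, from which one checks that $f\mapsto f\circ A$ maps $D_\oom$ onto $D_\oom$ and $D_\ho$ onto $D_\ho$ with equivalent norms, and carries $\Om_0\subset\subset\Om$ into a set of the same type. So we may assume $\bL=\De$. Fix now $\Om_0\subset\subset\Om_1\subset\subset\Om$ and a cut-off $\ci\in C^\infty_c(\Om)$ with $\ci\equiv1$ on $\Om_1$, and set $\cf=\ci f$, extended by $0$ to $\R^n$: the product rule for oscillations together with $r\le C\,\oom(r)$ (from \eqref{simplelas}) gives $\cf\in D_\oom(\R^n)$ with $\|\cf\|_\oom\le C\|f\|_\oom$. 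Write $u=w+v$, where $w(x)=\int_{\R^n}\Ga(x-y)\,\cf(y)\,dy$ is the Newtonian potential of $\cf$ and $\Ga$ is the fundamental solution of $\De$. Then $\De v=0$ in $\Om_1$ and $\|v\|\le\|u\|+\|w\|\le C\|f\|$, so classical interior estimates for harmonic functions give $\|v\|_{C^2(\Om_0)}\le C\|f\|$; since $\ho(r)/r=\frac1r\int_0^r\oom(s)\,\frac{ds}{s}\to+\infty$ as $r\to0$ by \eqref{lecas}, we have $\ho(r)\ge c_0 r$ on $(0,R]$, whence $C^2(\Om_0)\hookrightarrow D_\ho(\Om_0)$ and $v$ contributes only $C\|f\|_\oom$ to the desired norm. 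Thus it suffices to prove $\|\na^2 w\|_{\ho,\Om_0}\le C\|\cf\|_\oom$ (in particular $w\in C^2$, hence $u\in C^2(\Om_0)$).

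Two elementary consequences of concavity are used throughout. First, $\oom(r)\le C\,\ho(r)$: indeed $\ho(r)\ge\int_{r/2}^{r}\oom(s)\,\frac{ds}{s}\ge(\log2)\,\oom(r/2)\ge\tfrac12(\log2)\,\oom(r)$ by \eqref{concavo}. Second, $\ho(Kr)\le C_K\,\ho(r)$ for each fixed $K\ge1$, obtained by splitting $\int_0^{Kr}=\int_0^{r}+\int_r^{Kr}$ and using $\oom(s)\le\oom(Kr)\le K\oom(r)\le CK\,\ho(r)$ on $[r,Kr]$. Next, choosing a fixed large ball with $\operatorname{supp}\cf\cup\Om_0\subset\subset I(0,M)$, one has, for $x\in\Om_0$, the representation
\[
\pa_i\pa_j w(x)=\int_{I(0,M)}\pa_i\pa_j\Ga(x-y)\,\big(\cf(y)-\cf(x)\big)\,dy-\cf(x)\int_{\pa I(0,M)}\pa_i\Ga(x-y)\,\cn_j(y)\,dS_y ,
\]
the first integral being absolutely convergent since $\cf\in D_\oom\subset C_*$. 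The surface term is harmless: differencing it at $x_1,x_2\in\Om_0$, with $\de:=|x_1-x_2|$, gives $(\cf(x_1)-\cf(x_2))$ times a bounded constant plus $\cf(x_2)$ times a difference of a kernel smooth on $\Om_0$ (as $\pa I(0,M)$ is at positive distance from $\Om_0$), hence is $\le C\|\cf\|_\oom\,(\oom(\de)+\de)\le C\|\cf\|_\oom\,\ho(\de)$.

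The core is the oscillation of $I(x):=\int_{I(0,M)}\pa_i\pa_j\Ga(x-y)(\cf(y)-\cf(x))\,dy$. Fix $K\ge3$, split the domain into $I(x_1,K\de)$ and its complement in $I(0,M)$, and expand $I(x_1)-I(x_2)$ accordingly; this produces four terms. Over $I(x_1,K\de)$, using $|\pa_i\pa_j\Ga(z)|\le C|z|^{-n}$ and $|\cf(y)-\cf(x_k)|\le[\cf]_\oom\,\oom(|y-x_k|)$, each of the two near-field integrals is $\le C[\cf]_\oom\int_0^{CK\de}\oom(r)\,\frac{dr}{r}=C[\cf]_\oom\,\ho(CK\de)\le C[\cf]_\oom\,\ho(\de)$ (in the integral centred at $x_2$ one first enlarges $I(x_1,K\de)$ to a ball centred at $x_2$). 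In the far region $|x_1-y|\ge K\de$, so $|x_2-y|\ge\tfrac12|x_1-y|$ there, and the mean value theorem gives $|\pa_i\pa_j\Ga(x_1-y)-\pa_i\pa_j\Ga(x_2-y)|\le C\de\,|x_1-y|^{-n-1}$; hence the term keeping the reference value $\cf(x_1)$ is bounded by
\[
C[\cf]_\oom\,\de\int_{K\de}^{R}\frac{\oom(r)}{r^{2}}\,dr+C\|\cf\|\,\de\;\le\;C[\cf]_\oom\,\de\int_{\de}^{R}\frac{\oom(r)}{r^{2}}\,dr+C\|\cf\|\,\de\;=\;C[\cf]_\oom\,B(\de)\,\ho(\de)+C\|\cf\|\,\de ,
\]
where $R$ is taken $\ge2M$ (allowed by equivalence of norms), $B$ is the quantity \eqref{eeles}, and the part of the range where $|x_1-y|$ is of order one was estimated separately via $\|\cf\|$. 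By Lemma~\ref{beat}, $B(\de)\le C_2$ for small $\de$, and $\de\le c_0^{-1}\ho(\de)$; so this term is $\le C\|\cf\|_\oom\,\ho(\de)$. The last term, $(\cf(x_1)-\cf(x_2))\int_{\mathrm{far}}\pa_i\pa_j\Ga(x_2-y)\,dy$, is handled by observing that $\int_{\mathrm{far}}\pa_i\pa_j\Ga(x_2-y)\,dy$ is bounded uniformly in $x_1,x_2$: subtracting off balls concentric about $x_2$, on which $\int\pa_i\pa_j\Ga(x_2-\cdot)$ equals an absolute constant, leaves regions on which a crude size estimate bounds the integral; so this term is $\le C[\cf]_\oom\,\oom(\de)\le C[\cf]_\oom\,\ho(\de)$. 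Summing the four contributions, and disposing of $\de$ bounded away from $0$ by the routine sup bound $\|\na^2 w\|_{L^\infty(\Om_0)}\le C\|\cf\|_\oom$, yields $\|\na^2 w\|_{\ho,\Om_0}\le C\|\cf\|_\oom$, which proves the interior estimate. For a flat boundary portion one rotates it into $\{x_n=0\}$ (leaving $\De$ invariant) and extends $u$ and $f$ by odd reflection; since $f=0$ there, $[f^{\mathrm{odd}}]_\oom\le2[f]_\oom$ (choose the point where the segment joining a point to the reflection of another meets the hyperplane, and use that $f$ vanishes there together with concavity of $\oom$), so the interior estimate applies to the reflected problem and gives the assertion up to such boundary points. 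Optimality in the sharp sense is postponed to Section~\ref{optimus}.

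The step I expect to be the real obstacle is the far-field difference estimate, i.e. bounding $\de\int_{K\de}^{R}\oom(r)\,r^{-2}\,dr$ by $\ho(\de)$: this is precisely the purpose of the quantity $B(r)$ in \eqref{eeles} and of Lemma~\ref{beat}, and it is here that the assumptions \eqref{massim}, \eqref{simplelas} and \eqref{naflitas} are genuinely used. Everything else rests on the two scaling properties $\oom\le C\,\ho$ and $\ho(K\cdot)\le C_K\,\ho$, together with the standard bookkeeping of singular-integral splittings; the reduction to $\De$ and the odd-reflection argument are routine once the stability of $D_\oom$ under linear changes of variable and under odd reflection---both consequences of concavity of $\oom$---is granted.
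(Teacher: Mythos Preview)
Your proof is correct, and the decisive step---bounding the far-field contribution $\de\int_{K\de}^{R}\oom(r)\,r^{-2}\,dr$ by $\ho(\de)$ via the quantity $B(r)$ of \eqref{eeles} and Lemma~\ref{beat}---is exactly the heart of the paper's argument as well.

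The packaging, however, differs in two respects worth noting. First, the paper does \emph{not} reduce to $\De$: it works directly with the fundamental solution $J(x)$ of the general constant-coefficient operator $\bL$ (Section~\ref{apotes-G} and~\ref{elipse}), observing that $\pa_i\pa_j J$ is a Calder\'on--Zygmund kernel of the required type, and proves the H-K-L-G inequality (Theorem~\ref{ofundas-G}) for such kernels. Your affine reduction is legitimate and slightly shortens the analysis, but costs the observation that the kernel estimate holds in that generality. Second, the paper localizes by cutting off $u$ rather than $f$: it sets $v=\zeta u$ with $\zeta$ a radial cut-off, applies the kernel estimate to the compactly supported $v$ via the identity $v=\bS\bL v$, and then controls the commutator $\bL(\zeta u)-\zeta\bL u$ using only $\|u\|_{C^2}$, which is available from Theorem~\ref{laplaces}. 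You instead cut off $f$, form the Newtonian potential $w$, and dispose of the harmonic remainder $u-w$ by interior estimates for harmonic functions. Both routes are standard; yours avoids the somewhat fiddly bookkeeping of \eqref{princ}--\eqref{tormes}, while the paper's avoids the surface term in the representation of $\pa_i\pa_j w$ and the separate treatment of the ``last term'' $(\cf(x_1)-\cf(x_2))\int_{\mathrm{far}}\pa_i\pa_j\Ga(x_2-y)\,dy$ (which in the paper's formulation is absorbed by the cancellation \eqref{simp}, since one works on all of $\R^n$ with a compactly supported integrand). The odd-reflection argument for flat boundary points and the deferral of sharpness to Section~\ref{optimus} match the paper exactly.
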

A point $\,x\,\in\,\pa\,\Om\,$ is said to be a \emph{flat boundary
point} if the boundary is flat in a neighborhood of the point. The
meaning of \emph{sharp optimality} is the following (our abbreviate
notation seems clear).
\begin{definition}
We say that a given regularity statement of type $\oom \rightarrow
\,\ho\,$ is sharp if any regularity statement $\oom \rightarrow
\,\ho_0\,,$ obtained by replacing $\,\ho\,$ by any other
$\,\ho_0\,,$ implies the existence
of a constant $c$ for which $\,\ho(r)\leq\,c\,\ho_0(r)\,.$%
\label{sharpas}
\end{definition}
The sharp regularity claimed in theorem \ref{sufasvero} will be
proved in section \ref{optimus}.

\vspace{0.2cm}

Much stronger results hold if the constant $C_1\,$ in equation
\eqref{naflitas} is positive and finite. In this case one has
\begin{equation}
D_{\ho}(\Ov)=\,D_{\oom}(\Ov)\,.%
\label{fullas}
\end{equation}
In fact, by de l'H\^opital rule, one shows that
$$
\lim_{r\rightarrow 0}\,\frac{\ho(r)}{\,\oom(r)}=\,
\lim_{r\rightarrow 0}\,\frac{\oom(r)}{r\,\,\oom'(r)},%
$$
if the second limit exists. Hence, under this last hypothesis, the
identity \eqref{fullas} holds if (actually, and only if) the limit
is positive and finite. Clearly, \eqref{fullas} holds by merely
assuming the inequality required in proposition \ref{eqnormes}. We
will show that if \eqref{fullas} holds then the operator $\bL$ can
have variable coefficients, and full regularity occurs up to any
(regular) boundary point. More precisely, one has the following
result.
\begin{theorem}
Assume that the oscillation function $\,\oom(r)\,$,  concave and
differentiable, satisfies conditions \eqref{massim},
\eqref{simplelas}, and \eqref{naflitas} for some $\,C_1<\,+\,\infty\,$ . Further, define
$\,{\ho}(r)\,$  by \eqref{chapeu}. Let $\,f \in D_{\oom}(\Ov)\,,$
and let $\,u\,$ be the solution of problem \eqref{lapnao}. Then
$\,\na^2\,u \in \,D_{\oom}(\Ov)\,$ and
\begin{equation}
\|\,\na^2\,u\,\|_{\,\oom} \leq \,C\,\|\,f\,\|_{\oom}\,,%
\label{hahega-2}
\end{equation}
for some positive constant $\,C\,.$ Regularity in the sharp sense
holds.%
\label{sufasvero-2}
\end{theorem}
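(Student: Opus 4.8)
The plan is to prove the stronger assertion $\na^2 u\in D_\ho(\Ov)$ with $\|\na^2 u\|_\ho\le C\|f\|_\oom$. Indeed, since $C_1<+\infty$, the identity \eqref{fullas} gives $D_\ho(\Ov)=D_\oom(\Ov)$ with equivalent norms, so the passage from $\oom$ to $\ho$ is merely fictitious — and this absence of loss is exactly what lets the localization arguments below close. I would then run the classical Schauder scheme, transplanted to the scale $D_\oom$: (i) the constant-coefficient \emph{interior} estimate is already theorem \ref{sufasvero}; (ii) variable coefficients are dealt with by freezing the $a_{ij}$ at a point and absorbing the perturbation; (iii) the boundary is flattened, reducing matters to the flat-boundary part of theorem \ref{sufasvero}; (iv) a partition of unity assembles the local estimates into \eqref{hahega-2}.

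For the interior step I would first record the elementary facts: $f\in D_\oom(\Ov)\subset C_*(\Ov)$ gives $\na^2 u\in C(\Ov)$ with $\|\na^2 u\|\le C\|f\|_\oom$ by theorem \ref{laplaces}, while $f\in C(\Ov)$ gives $u\in C^{1,\bt}(\Ov)$ for every $\bt<1$, with $\|u\|_{C^{1,\bt}}\le C\|f\|$. Because $C_1<+\infty$, condition \eqref{naflitas} forces $\oom(r)$ to behave near the origin like a power $r^{1/C_1}$ up to a slowly varying factor; hence one may fix $\bt\in(1/C_1,1)$ with $r^\bt=o(\oom(r))$, so that $C^{0,\bt}(\Ov)\hookrightarrow D_\oom(\Ov)$, and any product of a Lipschitz function with $u$ or with $\na u$ then lies in $D_\oom$ with norm $\le C\|f\|$ (using also $Lip(\Ov)\subset D_\oom(\Ov)$, \eqref{namely}). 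Now localize: for $x_0\in\Om$, a ball $B=I(x_0,\ro)\subset\subset\Om$, a cut-off $\eta\in C^\infty_c(B)$ and $\bL_0=\sum a_{ij}(x_0)\pa_i\pa_j$, the function $w=\eta u$ solves $\bL_0 w=g$ with $g=\eta f+\eta\,(\bL_0-\bL)u+(\bL_0\eta)\,u+2\sum a_{ij}(x_0)\,\pa_i\eta\,\pa_j u$. The last two terms lie in $D_\oom$ with norm $\le C\|f\|$ by the above; for the middle one the product rule for $D_\oom$ semi-norms, together with $\|a_{ij}(x_0)-a_{ij}\|_{C(B)}\le L\ro$ ($L=$ Lipschitz constant of the $a_{ij}$), gives $[\eta(\bL_0-\bL)u]_\oom\le CL\ro\,[\na^2 w]_\oom+CL\|f\|_\oom$. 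Feeding this into theorem \ref{sufasvero} applied to $\bL_0 w=g$, and using $D_\ho=D_\oom$, yields $[\na^2 w]_\oom\le CL\ro\,[\na^2 w]_\oom+C\|f\|_\oom$; choosing $\ro$ so small that $CL\ro\le\tfrac12$ absorbs the first term, and a finite covering gives $\|\na^2 u\|_{\oom,\Om_0}\le C(\Om_0)\|f\|_\oom$ for every $\Om_0\subset\subset\Om$.

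For the boundary step I would use $\Ga\in C^{2,\la}$: near $x_0\in\Ga$, flatten the boundary by a $C^{2,\la}$ diffeomorphism $\Pf$. In the new coordinates \eqref{lapnao} is a problem of the same type — the transformed second-order part is uniformly elliptic with \emph{Lipschitz} coefficients (the Jacobian entries are $C^{1,\la}\subset Lip$, a class stable under products), a harmless first-order term of class $C^{0,\la}$ appears, and the transformed datum $f\circ\Pf^{-1}$ stays in $D_\oom$ with comparable norm, because composition with a bi-Lipschitz map preserves $D_\oom$: by Lemma \ref{cocas} (concavity of $\oom$), $\om_{f\circ\Pf^{-1}}(r)\le\om_f(Lr)\le L\,\om_f(r)$. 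One then repeats the freezing argument at the flat boundary point, invoking the \emph{flat-boundary} part of theorem \ref{sufasvero} for the constant-coefficient operator (the first-order term being a lower-order perturbation absorbed just like the freezing error), and transports the estimate back through $\Pf$, again using stability of $D_\oom$ under $C^{2,\la}$ changes of variables and $u\in C^{1,\bt}$ to handle the lower-order terms the substitution produces. Finally, cover $\Ov$ by interior balls and boundary neighbourhoods of this kind, take a subordinate partition of unity $\{\ps_k\}\subset C^\infty$, and apply the appropriate local estimate to $\ps_k u$, whose equation has right-hand side $\ps_k f+[\bL,\ps_k]u$ with $[\bL,\ps_k]u$ of first order, hence in $D_\oom$ with norm $\le C\|u\|_{C^1}\le C\|f\|$. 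Summing over $k$ gives $\|\na^2 u\|_\oom\le C\|f\|_\oom$, which is \eqref{hahega-2} since $D_\ho=D_\oom$; sharpness follows from the sharpness proved for theorem \ref{sufasvero} in section \ref{optimus}, because here $\ho\le c\,\oom$.

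The main obstacle is making the two localizations legitimate. They work \emph{only} in this no-loss regime: the constant-coefficient estimate returns to $D_\ho=D_\oom$, so the error created by freezing the coefficients (or by flattening the boundary) can be put back into the \emph{same} space and absorbed — a step that would fail in the genuinely intermediate regime of theorem \ref{sufasvero}. The boundary part moreover rests on stability of $D_\oom$ under bi-Lipschitz changes of variables, i.e.\ precisely on Lemma \ref{cocas} and hence on the concavity hypothesis. A secondary subtlety is that, by theorem \ref{osned}, $C^\infty(\Ov)$ is \emph{not} dense in $D_\oom(\Ov)$, so one cannot first prove the estimate for smooth $f$ and pass to the limit; the a priori character of the estimates has instead to be justified either by the method of continuity in the coefficients of $\bL$, or — in the spirit of theorem \ref{laplaces} — directly from the pointwise and gradient bounds of the Green function of $(\bL,\Om)$, valid for Lipschitz coefficients up to a $C^{2,\la}$ boundary, the modulus-of-continuity bound $\om_{\na^2 u}(r)\le C[f]_\oom\,\ho(r)$ then following from a Calder\'on--Zygmund splitting whose far-field part is controlled by Lemma \ref{beat}.
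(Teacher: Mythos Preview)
Your proposal is correct and follows essentially the same route as the paper: the paper too builds the constant-coefficient interior (and flat-boundary) estimate via the singular-integral inequality underlying theorem \ref{sufasvero}, then in Section \ref{muda} freezes the coefficients and uses precisely the identity $D_\ho=D_\oom$ (i.e.\ \eqref{junto}, a consequence of $C_1<+\infty$) to absorb the perturbation $(\bL_0-\bL)v$ for small $\ro$, and in Section \ref{elipse2} passes to the boundary by odd reflection through the flat part followed by a $C^{2,\la}$ flattening and a finite covering. One small imprecision: in your freezing step it is cleaner to estimate $[(\bL_0-\bL)w]_\oom$ (with $w=\eta u$) rather than $[\eta(\bL_0-\bL)u]_\oom$, since the latter involves $[\na^2 u]_\oom$ on $\mathrm{supp}\,\eta$ and not $[\na^2 w]_\oom$ directly; the paper's equation \eqref{ellez} does exactly this and makes the absorption transparent.
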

Regularity in the sharp sense follows trivially from full
regularity. But it is quite significant, even necessary, in dealing
with intermediate regularity results, like in theorem
\ref{sufasvero}. See the example shown in section \ref{DOPO}, in the
framework of Log spaces $\,D^{0,\,\al}(\Ov)\,.$%

\vspace{0.2cm}

The conditions imposed in the above statements can be weakened as
follows. We start by replacing the concavity assumption by the
existence of a constant $\,k_1>\,1\,$ such that
\begin{equation}
\oom(k_1\,r) \leq\,c_1\,\oom(r)\,%
\label{comka}
\end{equation}
for some positive constant $\,c_1\,,$ and for $\,r\,$ in a
neighborhood of the origin. We take into account that, if
\eqref{comka} holds, then given $\,k_2>\,1\,,$ there is a positive
constant $\,c_2\,$ such that
\begin{equation}
\oom(k_2\,r) \leq\,c_2\,\oom(r)\,,%
\label{comka-zw}
\end{equation}
for $\,r\,$ in some $\,\de_0-$neighborhood of the origin. The proof
is obvious, by a bootstrap argument. Take into account that, if
$\,k_2>\,k_1\,,$ there is an integer $m$ such that $\,k_2
\leq\,k_1^{\,m}\,.$ If $\,\oom(r)\,$ is concave the lemma
\ref{cocas} shows \eqref{comka} for $\,k_2=\,c_2=\,1\,.$\par%
Actually, in the sequel we will prove that in theorem
\ref{sufasvero}, concavity, differentiability, and assumptions
\eqref{massim}, \eqref{simplelas}, and \eqref{naflitas}, can be
replaced by the more general set of assumptions \eqref{massim},
\eqref{simplelas}, \eqref{comka}, and \eqref{eeles-2}. The same
holds for theorem \ref{hahega-2}, by adding the assumption
\eqref{junto}.

\vspace{0.2cm}

For previous related results we refer to \cite{burch} and
\cite{shapiro}. The author is grateful to Piero Marcati who, after a
seminar on our results, found the above references.
\section{An H-K-L-G inequality.}\label{apotes-G}%
In this section we prove the Theorem \ref{ofundas-G} below. The
proof is an adaptation of that developed in \cite{JBS} to prove the
so called H\"older-Korn-Lichtenstein-Giraud inequality (see
\cite{JBS}, part II, section 5, appendix 1) in the framework of
H\"older spaces. Following \cite{JBS}, we considered \emph{singular
kernels} $\,\mK(x)$ of the form
\begin{equation}
\mK(x)=\,\frac{\sg(x)}{|x|^n}\,,%
\label{kapes}
\end{equation}
where $\,\sg(x)\,$ is infinitely differentiable for $\,x\neq\,0\,$,
and satisfies the properties $\,\sg(t\,x)=\,\sg(x)\,,$ for
$\,t>0\,,$ and
$$
\int_S \sg(x) \,dS =\,0\,,
$$
where $\,S=\,\{\,{x:\,|x|=1}\,\}\,$. It follows easily that, for
$\,0<\,\ro_1<\,\ro_2\,,$%
\begin{equation}
\int_{\ro_1 <|x|< \ro_2} \mK(x) \,dx =\,\int_{\ro_1 <|x|} \mK(x)
\,dx=
\,\int  \mK(x) \,dx=\,0\,,%
\label{simp}
\end{equation}
where the last integral is in the Cauchy principal value sense.\par%
For continuous functions $\,\phi\,$ with compact support, the
convolution integral
\begin{equation}
(\mK \ast \phi)(x)=\,\int \,\mK(x-y)\,\phi(y)\,dy\,,%
\label{convint}
\end{equation}
 extended to the whole space $\,\R^n$, exists as a Cauchy principal value and is finite.%

\vspace{0.2cm}

We set $\,I(\ro)=\{\,x:\,|\,x\,| \leq\, \ro\,\}\,,$
$\,D_{\oom}(\ro)=\,\,D_{\oom}(I(\ro))\,,$ and do the same for other
functional spaces, norms, and semi-norms labeled by $\ro\,$.\par%
\begin{theorem}
Let $\,\mK(x)$ be a singular kernel enjoying the properties
described above. Further, assume that the oscillation function
$\,\oom\,$ satisfies \eqref{massim}, \eqref{simplelas},
\eqref{comka}, and \eqref{eeles-2}. Let $\phi \in
\,D_{\oom}(\ro)\,,$ vanish for $\,|x| \geq\,\ro\,.$ Then $\,\mK \ast
\phi \in\, D_{\ho}(\ro)\,.$ Furthermore, in the sphere $\,I(\ro)\,,$
one has
\begin{equation}
[\,(\mK \ast \phi)\,]_\ho \leq\,C\,\|\,
\phi\,\|_\oom\,,%
\label{tima}
\end{equation}
where $\,C=\,C(n,\,\oom\,,|\|\,\sg\,\||\,)\,.$ \label{ofundas-G}
\end{theorem}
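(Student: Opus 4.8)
The plan is to adapt the classical estimate for singular integral operators on Hölder spaces, as in \cite{JBS}, replacing the Hölder modulus $r^\la$ by the general modulus $\oom(r)$ and tracking how the modulus of $\mK\ast\phi$ becomes $\ho(r)=\int_0^r\oom(s)\,ds/s$. Fix two points $x_1,x_2$ with $|x_1-x_2|=d$, and write $v=\mK\ast\phi$. Using the cancellation property \eqref{simp}, I would express
$$
v(x_1)-v(x_2)=\int \mK(x_1-y)\big(\phi(y)-\phi(x_1)\big)\,dy-\int \mK(x_2-y)\big(\phi(y)-\phi(x_2)\big)\,dy,
$$
and split each integral over the "near" region $|y-x_1|<Kd$ (say $K=2$) and the "far" region $|y-x_1|\ge Kd$. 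In the near region I would estimate each term separately: $|\mK(x_i-y)|\le c|x_i-y|^{-n}$, $|\phi(y)-\phi(x_i)|\le[\phi]_\oom\,\oom(|x_i-y|)$, and then integrate in polar coordinates to get a bound $c[\phi]_\oom\int_0^{cd}\oom(s)\,ds/s\le c[\phi]_\oom\,\ho(cd)$; by \eqref{comka} (iterated, as noted after \eqref{comka-zw}) this is $\le c[\phi]_\oom\,\ho(d)$. In the far region I would use the standard mean-value/difference estimate $|\mK(x_1-y)-\mK(x_2-y)|\le c\,d\,|y-x_1|^{-n-1}$ together with $|\phi(y)-\phi(x_1)|\le[\phi]_\oom\,\oom(|y-x_1|)$, plus a lower-order term coming from $\phi(x_1)-\phi(x_2)$ paired with $\int_{|y-x_1|\ge Kd}(\mK(x_1-y)-\mK(x_2-y))\,dy$, which is controlled by $\oom(d)\le\ho(d)$ after using \eqref{simp} again on annuli. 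The far contribution is then bounded by $c\,d\,[\phi]_\oom\int_{Kd}^{R}\oom(s)s^{-2}\,ds$.

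The crucial point, and the place where the hypotheses \eqref{eeles-2}/\eqref{comka} enter decisively, is to show that this far-region quantity is also $\lesssim\ho(d)$. That is exactly the assertion
$$
d\int_{d}^{R}\frac{\oom(s)}{s^2}\,ds\le C_2\int_0^d\frac{\oom(s)}{s}\,ds=C_2\,\ho(d),
$$
i.e. $B(d)\le C_2$ in a neighborhood of the origin, which is precisely \eqref{eeles-2} (equivalently, by Lemma \ref{beat}, a consequence of \eqref{massim}, \eqref{simplelas}, \eqref{naflitas}). For the range of $s$ bounded away from $0$ one absorbs everything into the constant using $\oom$ nondecreasing and bounded. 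Combining the near and far estimates gives $|v(x_1)-v(x_2)|\le C\|\phi\|_\oom\,\ho(d)$ for all small $d$; for $d$ not small one uses instead the sup bound $\|v\|\le C\|\phi\|$ (finite since $\phi$ has compact support and $\mK\ast\phi$ is a finite principal value) together with $\ho(d)$ bounded below away from $0$, and increases $C$. This yields $[v]_{\ho}\le C\|\phi\|_\oom$, which is \eqref{tima}, and in particular $v\in D_\ho(\ro)$.

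There are two technical points I would treat carefully rather than wave away. First, the existence and finiteness of the principal-value convolution and the legitimacy of the cancellation manipulations: this is standard under $\phi\in C$ with compact support (the $\oom$-continuity makes the singular integral converge absolutely near the diagonal after subtracting $\phi(x)$), and I would just invoke it as in \cite{JBS}. Second, the bookkeeping of constants: assumption \eqref{comka} is used repeatedly to pass from $\ho(cd)$ or $\ho(Kd)$ back to $c\,\ho(d)$, and one should check (as the paragraph following \eqref{comka-zw} already records) that enlarging the dilation factor only changes the constant. The main obstacle is genuinely the far-field estimate: without control of $B(r)$ one would only get $\na^2 u$ in a strictly worse space, so the heart of the proof is recognizing that \eqref{eeles-2} is exactly the inequality needed to close the argument with the sharp modulus $\ho$. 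Everything else is a modulus-of-continuity transcription of the Hölder-Korn-Lichtenstein-Giraud proof.
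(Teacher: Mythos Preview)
Your proposal is correct and follows essentially the same route as the paper: subtract $\phi(x)$ via the cancellation \eqref{simp}, split into near and far regions around one of the two points, bound the near part by $\int_0^{cd}\oom(s)\,ds/s$ and the far part (after the mean-value estimate on $\mK$) by $d\int_{cd}^{R}\oom(s)s^{-2}\,ds$, and then invoke \eqref{eeles-2} and \eqref{comka}/\eqref{comka-zw} to reduce both to $C\,\ho(d)$. The only cosmetic difference is that the paper uses a three-region decomposition $I_1,I_2,I_3$ (splitting the far region at a fixed $\de_0$ so that \eqref{comka-zw} with $k_2=3$ applies on $(0,\de_0)$, and handling $|y-x_0|>\de_0$ by the crude bound $|\phi(y)-\phi(x_1)|\le 2\|\phi\|$, which is where the full norm $\|\phi\|_\oom$ rather than just $[\phi]_\oom$ enters), whereas you fold this into ``for the range of $s$ bounded away from $0$ one absorbs everything into the constant''; the substance is identical.
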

\begin{proof}
Let $x_0,\,x_1 \in I(\ro)\,$, $0<|x_0-\,x_1|=\,\de <\,\de_0
\leq\,\ro\,.$  The positive constant $\,\de_0\,$ is fixed here in
correspondence to the choice  $\,k_2=\,3\,$ in \eqref{comka-zw}. In
the concave case (assumed, for clearness, in the statements of
theorems \ref{sufasvero} and  \ref{sufasvero-2}), we may set
$\,k_2=\,1\,$.\par%
For convenience, we will use the simplified notation
$\,\oom(r)=\,\oom_\phi(r)\,.$ From \eqref{simp} it follows that
$$
(\mK \ast \phi)(x)=\,\int \,\big(\,\phi(y)-\phi(x)\big)\,\mK(x-\,y)
\,dy\,.
$$
Hence, with abbreviated notation,
\begin{equation}
\begin{array}{l}
(\mK \ast \phi)(x_0)\,-(\mK \ast \phi)(x_1)=\\
\\
\int\, \Big\{\,\big(\,\phi(y)-\phi(x_0)\big)\,\mK(x_0-\,y)
\,-\big(\,\phi(y)-\phi(x_1)\big)\,\mK(x_1-\,y)\,\Big\} \,dy=\\
\\
\int_{|y-x_0|<\,2\de} \,\{...\} \,dy +\,\int_{2\de <|y-x_0|<\,\de_0}
\,\{...\} \,dy +\,\int_{\de_0<|y-x_0|} \,\{...\} \,dy \equiv
\,I_1+I_2+I_3\,.
\end{array}
\label{decomp-G}
\end{equation}
Since%
$$
\{y:\,|y-x_1|<\,2\de\} \subset\, \{y: \,|y-x_0|<\,3\de\}
$$
it follows that
\begin{equation}
\begin{array}{l}
\int_{|y-x_0|<\,2\de}
\,\big|\,\phi(y)-\phi(x_1)\big|\,|\mK(x_1-\,y)| \,dy \leq\\%
\\
\int_{|y-x_1|<\,3\de}
\,\big|\,\phi(y)-\phi(x_1)\big|\,|\mK(x_1-\,y)| \,dy\leq \\
\\
\|\,\sg\,\|\,\int_0^{3\de}\,\frac{\oom(r)}{r} \,dr\,\leq\,
\|\,\sg\,\|\,[\,\phi\,]_{\oom} \,\int_0^{3\de}\,\frac{\oom(r)}{r}
\,dr\,,
\end{array}
\label{pora3}
\end{equation}
where we appealed to polar-spherical coordinates with $\,
r=\,|x_1-y|\,,$ to the fact that $\sg$ is positive homogeneous of
order zero, to \eqref{kapes}, and to definition \eqref{fom2}.\par%
A similar, simplified, argument shows that equation \eqref{pora3}
holds by replacing $x_1$ by $x_0$ and $3\de$ by $2\de$. So,
$$
|I_1| \leq \,2\,\|\,\sg\,\|\,[\,\phi\,]_{\oom}
\,\int_0^{3\de}\,\frac{\oom(r)}{r} \,dr\,\leq
\,c\,\|\,\sg\,\|\,[\,\phi\,]_{\oom}\,\,\int_0^{\de}\,\frac{\oom(r)}{r}\,
\,dr\,
$$
where we have appealed to \eqref{comka-zw} for $\,k_2=\,3\,.$ Hence,
\begin{equation}
|I_1| \leq \,c\,\|\,\sg\,\|\,[\,\phi\,]_{\oom}\,\ho(\de)\,.%
\label{pora4-G}
\end{equation}

\vspace{0.2cm}

On the other hand
$$
I_2=\,\int_{2\de <|y-x_0|<\,\de_0}
\,\big(\,\phi(x_1)-\phi(x_0)\big)\,\mK(x_0-\,y) \,dy+
$$
$$
\int_{2\de<|y-x_0|<\,\de_0}
\,\big(\,\phi(y)-\phi(x_1)\big)\,\big(\mK(x_0-\,y)-\,\mK(x_1-\,y)\big)
\,dy\,.
$$
The first integral vanishes, due to \eqref{simp}. Hence,
$$
|I_2| \leq\,\int_{2\de<|y-x_0|<\,\de_0}
\,\big|\,\phi(y)-\phi(x_1)\big|\,\big|\,\mK(x_0-\,y)-\,\mK(x_1-\,y)\,
\big| \,dy\,.
$$
Further, by the mean-value theorem, there is a point $x_2$, between
$x_0$ and $x_1$, such that
$$
\big|\,\mK(x_0-\,y)-\,\mK(x_1-\,y)\,\big|
\leq\,\big|\,\na\,\mK(x_2-\,y)\,\big|\,\de\,.
$$
Since
$$
\pa_i\,\mK(x)=\,\frac{1}{|x|^{n+\,1}}\,\Big[\,(\pa_i\,\sg)
\Big(\frac{x}{|x|}\Big)\,-\,n\,\frac{x_i}{|x|}\,\sg(x)\,\Big]\,,
$$
it readily follows that
\begin{equation}
\begin{array}{l}
\big|\,\mK(x_0-\,y)-\,\mK(x_1-\,y)\,\big| \leq \\
\\
c\,\||\,\sg\,\|| \,\frac{\de}{|y-\,x_2|^{n+1}}
\leq\,c\,\||\,\sg\,\||
\,\frac{\de}{|y-\,x_0|^{n+1}}\,,%
\end{array}
\label{esao2}
\end{equation}
where $\,\||\,\sg\,\||\,$ denotes the sum of the $L^\infty$ norms of
$\sg$ and of its first order derivatives on the surface of the unit
sphere $I(0,1)\,.$ Note that, for $\,|x_0-\,y|>\,2\,\de\,,$ one has
$$
|x_0-\,y|\leq\,2\,|x_2-\,y|\leq\,4\,|x_0-\,y|\,.
$$
On the other hand, for $\,2\,\de <|x_0-\,y|\,,$
$$
|x_1-\,y|\leq\,3\,|x_0-\,y|\,.
$$
So,
$$
|\,\phi(y)-\phi(x_1)\,| \leq\,[\,\phi\,]_{\oom\,} \, \oom
(\,3\,|\,x_0-\,y|\,)\,.
$$
The above estimates show that
\begin{equation}
\begin{array}{l}
|\,I_2\,| \leq\,c\,\||\,\sg\,\||\,[\,\phi\,]_{\oom\,}
\,\de\,\int_{2\de}^{\de_0} \, \oom\big(\,3\,r\,\big)\,
r^{-2} \,dr\\
\\
\leq \,c\,\||\,\sg\,\||\,[\,\phi\,]_{\oom\,}
\,\de\,\int_{2\de}^{\,\de_0} \,\oom(\,r\,)\, r^{-2} \,dr\,,%
\end{array}%
\label{esao2nao}
\end{equation}
where we appealed to \eqref{comka-zw} for $\,k_2=\,3\,.$ Finally, by
\eqref{eeles-2}, it readily follows that
\begin{equation}
|\,I_2\,|\, \leq \,c\,\||\,\sg\,\||\,[\,\phi\,]_{\oom\,}\,\ho(\de)%
\label{esao23-G}
\end{equation}
for $\,\de \in\,(0,\,\de_0\,)\,.$

\vspace{0.2cm}

Finally we consider $I_3$. By arguing as for $I_2$, in particular by
appealing to \eqref{simp} and \eqref{esao2}, one shows that
\begin{equation}
\begin{array}{l}
|I_3| \leq\,C\,\de\,|\|\,\sg\,\||\,\int_{|y-x_0|>\,\de_0} \,
\,\frac{|\,\phi(y)-\phi(x_1)\big|}{\,|y-\,x_0|^{n+1}} \; dy \leq \\
\\
C\,\de\,|\|\,\sg\,\||\,\|\,\phi\,\|
\leq\,C\,|\|\,\sg\,\||\,\|\,\phi\,\|\,\ho(\de)\,.%
\end{array}%
\label{thislast}
\end{equation}
Note that, by a  de l'H\^opital rule, one shows that
\eqref{simplelas} holds with $\,\oom(r)\,$ replaced by $\,\ho(r)\,.$
From equation \eqref{decomp-G}, by appealing to \eqref{pora4-G},
\eqref{esao23-G}, and \eqref{thislast}, one shows that
\begin{equation}
|\,(\mK \ast \phi)(x_0)\,-(\mK \ast
\phi)(x_1)\,|\leq\,C\, \||\,\sg\,\|| \,\|\,\phi\,\|_{\oom\,}\,\ho(\de)\,,%
\label{tresis}
\end{equation}
for each couple of points $\,x_0,\,x_1 \in\,I(\ro)\,$ such that
$\,0<\,|x_0-\,x_1\,| \leq\,\de_0\,.$ Hence \eqref{ofundas-G}
holds.\par%
We may easily estimate $|\,(\mK \ast \phi)(x_0)\,-(\mK \ast
\phi)(x_1)\,|\,$ for pairs of points $\,x_0,\,x_1\,$ for which
$\,\de_0<\,|x_0-\,x_1\,| < \ro\,.$ However this is superfluous,
since $\,\de_0\,$ is fixed "once and for all".
\end{proof}
\section{The interior regularity estimate in the constant coefficients case.}\label{elipse}%
In this chapter we apply the theorem \ref{ofundas-G} to prove the
basic interior regularity result for solutions of the elliptic
equation \eqref{lapnao} in the framework of $\,D_{\oom}\,$ data
spaces. In this section $\bL$ ia a constant coefficients operator.
The proof is inspired by that developed in H\"older spaces in
\cite{JBS}, part II, section 5. For convenience, assume that $n\geq\,3\,.$%

\vspace{0.2cm}

By a fundamental solution of the differential operator $\,\bL\,$ one
means a distribution $\,J(x)\,$ in $\,\R^n\,$ such that
\begin{equation}
\bL\,J(x)=\,\de(x)\,.%
\label{fundas}
\end{equation}
The celebrated Malgrange-Ehrenpreis theorem states that every
non-zero linear differential operator with constant coefficients has
a fundamental solution (see, for instance, \cite{yosida}, Chap. VI,
sec. 10). We recall that the analogue for differential operators
whose coefficients are polynomials (rather than constants)
is false, as shown by a famous Hans Lewy's counter-example.\par%
In particular, for a second order elliptic operator with constant
coefficients and only higher order terms, one can construct
explicitly a fundamental solution $\,J(x)$ which satisfies the
properties (i), (ii), and (iii), claimed in \cite{JBS}, namely,\par%
(i)  $J(x)$ is a real analytic function for $\,|x| \neq\,0\,.$\par%
(ii) For $n\geq\,3\,$
\begin{equation}
J(x)=\,\frac{\sg(x)}{|x|^{n-\,2}}\,,%
\label{jicas}
\end{equation}
where $\sg(x)$ is positive homogeneous of degree $\,0\,$.\par%
(iii) Equation \eqref{fundas} holds. In particular, for every
sufficiently regular, compact supported, function $\,v\,$, one has
$$
v(x)=\,\int \,J(x-\,y)\,(\bL\,v)(y) \,dy\,.
$$
For a second order elliptic operator as above, one has%
\begin{equation}
J(x)=\,c\,\big(\,\sum\, A_{i\,j}\,x_i x_j\,\big)^{\frac{2-\,n}{2}}\,,%
\label{janota}
\end{equation}
where $A_{i\,j}$ denotes the cofactor of $a_{i\,j}$ in the
determinant $|\,a_{i\,j}\,|\,.$\par%
Following \cite{JBS}, we denote by $\bS$ the operator
\begin{equation}
(\bS\,\ph)(x)=\,\int \,J(x-\,y)\,\ph(y) \,dy=\,(J\ast\ph)(x)\,.%
\label{beesse}
\end{equation}
Note that, in the constant coefficients case, the
operator $\bT$ introduced in reference \cite{JBS} vanishes.\par%
Point (iii) above (see also \cite{JBS} "Lemma" A) shows that if $v$
is compact supported and sufficiently regular (for instance of class
$\,C^2\,$), then
\begin{equation}
v=\, \bS \bL\,v\,.%
\label{slfi}
\end{equation}
Due to the structure of the function $\,\sg(x)\,$ appearing in
equation \eqref{jicas}, it readily follows that second order
derivatives of $(\bS\,\ph)(x)$ have the form $\,\pa_i\,\pa_j
\,\bS\,\ph=\,\mK_{i\,j} \ast \phi\,,$ where the $\mK_{i\,j}$ enjoy
the properties described for singular kernels $\mK$ in section \ref{apotes-G}.\par%
We write, in abbreviated form,
\begin{equation}
\na^2\,\bS\,\ph\,(x)=\,\int \,\mK(x-y)\,\phi(y)\,dy\,,%
\label{convintos}
\end{equation}
where $\,\mK(x)\,$ enjoys the properties described at the beginning
of section \ref{apotes-G}. From  \eqref{convintos} it follows that
$$
\na^2\,\bS \bL v=\,\int \,\mK(x-y)\,\bL\,v(y)\,dy\,.%
$$
Hence, by Theorem \ref{ofundas-G}, one gets
\begin{equation}
[\,\na^2\,\bS \bL v\,]_{\ho;\,2\,\ro}\leq\,C\,[\,
\bL \,v\,]_{\oom;\,2\,\ro}\,.%
\label{masimus}
\end{equation}
By appealing to \eqref{slfi} we get the following result.
\begin{proposition}
Assume that the differential operator $\,\bL\,$ has constant
coefficients and that the oscillation function $\,\oom\,$ satisfies
assumptions \eqref{massim}, \eqref{simplelas}, \eqref{comka}, and
\eqref{eeles-2}. Let $\,v\,$ be a support compact function $ \in
C^{2}(2\,\ro)\,,$ such that $\,\bL\,v \in D_{\oom}(2\,\ro)\,.$ Then
\begin{equation}
[\na^2\, v\,]_{\ho;\,2\,\ro}\leq\,C\,[\,
\bL \,v\,]_{\oom;\,2\,\ro}\,.%
\label{larj}
\end{equation}
\label{lemitas}
\end{proposition}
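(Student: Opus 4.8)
The plan is to obtain \eqref{larj} as an immediate consequence of the representation formula \eqref{slfi} and the kernel estimate \eqref{masimus}, so that the only real task is to check that \eqref{slfi} applies to the datum $\,v\,$.

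First I would pass to global objects. Since $\,v\,$ is of class $\,C^2\,$ and compactly supported inside $\,I(2\,\ro)\,,$ its extension by zero belongs to $\,C^2(\R^n)\,$ with compact support, and $\,\bL\,v\,$ correspondingly extends to a compactly supported function of $\,D_{\oom}\,$ vanishing for $\,|x|\ge\,2\,\ro\,.$ Consequently the convolutions defining $\,\bS\,$ and $\,\na^2\,\bS\,$ converge, and point (iii) above (equivalently, ``Lemma'' A of \cite{JBS}) gives the identity $\,v=\,\bS\,\bL\,v\,.$

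Next I would differentiate this identity twice and invoke \eqref{convintos}, obtaining $\,\na^2\,v=\,\na^2\,\bS\,\bL\,v=\,\mK\ast(\bL\,v)\,,$ with $\,\mK\,$ a singular kernel of the kind treated in section \ref{apotes-G}. Applying Theorem \ref{ofundas-G} to $\,\phi=\,\bL\,v\in\,D_{\oom}(2\,\ro)\,,$ which vanishes for $\,|x|\ge\,2\,\ro\,,$ yields $\,\na^2\,v\in\,D_{\ho}(2\,\ro)\,$ together with the bound \eqref{masimus}, that is $\,[\na^2\,v\,]_{\ho;\,2\,\ro}\le\,C\,[\,\bL\,v\,]_{\oom;\,2\,\ro}\,,$ which is exactly \eqref{larj}.

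No serious obstacle is expected: the genuine work has already been done in Theorem \ref{ofundas-G}. Here the only point needing a little care is the zero-extension argument legitimizing \eqref{slfi}; and, as regards \eqref{masimus} itself, the passage from the full norm $\,\|\,\bL\,v\,\|_{\oom}\,$ to the semi-norm $\,[\,\bL\,v\,]_{\oom}\,$ — which is admissible because $\,\bL\,v\,$ is supported in a ball of radius $\,2\,\ro\,,$ so that every point of its support lies within a fixed multiple of $\,\ro\,$ (hence within the reference radius $\,R\,,$ after shrinking $\,\ro\,$ if necessary) of a zero of $\,\bL\,v\,,$ forcing $\,\|\,\bL\,v\,\|\le\,C\,[\,\bL\,v\,]_{\oom}\,.$
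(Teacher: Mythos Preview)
Your proposal is correct and follows the same route as the paper: the paper's entire argument is the one-line ``By appealing to \eqref{slfi} we get the following result,'' after having recorded \eqref{masimus} as a direct consequence of Theorem \ref{ofundas-G}. Your additional care with the zero-extension and with the passage from $\|\,\bL\,v\,\|_{\oom}$ to $[\,\bL\,v\,]_{\oom}$ simply fills in details that the paper leaves implicit.
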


\vspace{0.2cm}

One has the following interior regularity result. For brevity we
have consider two spheres of radius $\ro$ and $R$,  $R>\,\ro$, in
the particular case $R=\,2\,\ro.$
\begin{theorem}
Assume that the hypothesis of proposition \ref{lemitas} hold.
Further, let $u \in C^{2}(2\,\ro)\,$ be such that $\,\bL\,u \in
D_{\oom}(2\,\ro)\,.$ Then $\,\na^2\,u \in \,D_{\ho}(\ro)\,,$
moreover
\begin{equation}
[\na^2\,u\,]_{\,\ho;\,\ro} \leq \,C\,[\,\bL\,u\,]_{\oom,\,2\,\ro}+
\,c(\te)\,\Big(\,\frac{\|u\|}{\ro^3}+\,\frac{\|\,\na\,u\|}{\ro^2}+\,\frac{\|\,\na^2\,u\|}{\ro}\,\Big)\,\frac{|x-\,y|}{\oom(|x-\,y|)}\,,
\label{acasisim}
\end{equation}
for some positive constant $\,C,$ independent of $\ro\,$. In particular,
\begin{equation}
[\na^2\,u\,]_{\,\ho;\,\ro} \leq \,C\,[\,\bL\,u\,]_{\oom,\,2\,\ro}+
\,\frac{c(\te)}{\ro^3} \,\|\,u\|_{C^2(2\ro)}\,.
\label{acasisim-2}
\end{equation}
\label{ajudas}
\end{theorem}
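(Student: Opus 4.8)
The plan is to localize the problem by cutting off $u$ to make Proposition~\ref{lemitas} applicable, and then to absorb the error terms generated by the cutoff. Fix a smooth cutoff function $\z$ with $\z\equiv 1$ on $I(\ro)$, $\z$ supported in $I(\tfrac{3}{2}\ro)\subset\subset I(2\ro)$, and $|\na^k\z|\leq c(\te)\,\ro^{-k}$ for $k=0,1,2$. Set $v=\z\, u$. Then $v\in C^2(2\ro)$ has compact support inside $I(2\ro)$, so Proposition~\ref{lemitas} gives
\begin{equation}
[\na^2 v]_{\ho;\,2\ro}\leq C\,[\bL v]_{\oom;\,2\ro}.
\label{plan-loc}
\end{equation}
Since $\z\equiv 1$ on $I(\ro)$, we have $v=u$ there, so $[\na^2 u]_{\ho;\,\ro}\leq[\na^2 v]_{\ho;\,2\ro}$ and it remains to estimate the right-hand side of \eqref{plan-loc}.

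Next I would expand $\bL v$ by the Leibniz rule. Writing $\bL=\sum a_{ij}\pa_i\pa_j$ with constant coefficients,
\begin{equation}
\bL v=\z\,\bL u+\sum_{i,j} a_{ij}\bigl(2\,\pa_i\z\,\pa_j u+u\,\pa_i\pa_j\z\bigr)\equiv \z\,\bL u+E,
\label{plan-leibniz}
\end{equation}
where the error term $E$ involves only $u$, $\na u$ and the derivatives of $\z$, hence is supported in the annulus $I(\tfrac{3}{2}\ro)\setminus I(\ro)$ and satisfies a crude pointwise bound $\|E\|\leq c(\te)\bigl(\ro^{-1}\|\na u\|+\ro^{-2}\|u\|\bigr)$, with an analogous bound on $\|\na E\|$ picking up one more power of $\ro^{-1}$ and a contribution of $\ro^{-1}\|\na^2 u\|$. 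For the main term, $[\z\,\bL u]_{\oom;\,2\ro}\leq C[\bL u]_{\oom;\,2\ro}$ up to lower-order terms in $\|\bL u\|$ (estimate the oscillation of a product $\z g$ via $\om_{\z g}(r)\leq \|\z\|\,\om_g(r)+\|g\|\,\om_\z(r)$ and use $\om_\z(r)\leq c(\te)\ro^{-1}r$ together with $r/\oom(r)\to 0$, so the extra piece is dominated by $\|\bL u\|\,r/\oom(r)$). For the error term $E$, since $\bL$ and $\z$ are smooth and $E$ is a combination of $C^1$ functions with bounded $C^1$ norms of the indicated sizes, one has $\om_E(r)\leq \|\na E\|\,r$, hence $[E]_{\oom;\,2\ro}\leq \|\na E\|\cdot\sup_r\,r/\oom(r)$; here one should argue on a fixed small neighborhood and note $r/\oom(r)$ is bounded there by \eqref{simplelas}, or more precisely keep the explicit factor $|x-y|/\oom(|x-y|)$ as in the statement \eqref{acasisim}.

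Assembling these pieces yields exactly \eqref{acasisim}: the H\"olog-type semi-norm of $\na^2 u$ on $I(\ro)$ is bounded by $C[\bL u]_{\oom;\,2\ro}$ plus $c(\te)\bigl(\ro^{-3}\|u\|+\ro^{-2}\|\na u\|+\ro^{-1}\|\na^2 u\|\bigr)$ times the factor $|x-y|/\oom(|x-y|)$, the powers of $\ro$ matching the number of derivatives of $\z$ that fall on each term. The cruder inequality \eqref{acasisim-2} then follows by bounding $|x-y|/\oom(|x-y|)$ by a constant on the relevant range (again using \eqref{simplelas}, or restricting $|x-y|$ to a fixed small length so the ratio is $\leq 1$; for larger separations the semi-norm contribution is controlled directly by $\|\na^2 u\|$ and absorbed) and collecting all three $\ro$-powers under the single worst power $\ro^{-3}$, replacing $\|u\|,\|\na u\|,\|\na^2 u\|$ by $\|u\|_{C^2(2\ro)}$.

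The main obstacle I anticipate is the bookkeeping of the $\ro$-dependence: one must verify that the constant $C$ in front of $[\bL u]_{\oom;\,2\ro}$ genuinely does not depend on $\ro$ — this relies on Proposition~\ref{lemitas} being stated with a $\ro$-independent constant (which it is, by the dilation invariance built into the kernel estimates of Theorem~\ref{ofundas-G}) and on the scaling of $\z$. A secondary technical point is handling the oscillation of products against the non-doubling modulus $\oom$; the inequality $\om_{fg}(r)\le\|f\|\,\om_g(r)+\|g\|\,\om_f(r)$ is elementary, but one has to be careful that the resulting extra terms are genuinely of the stated lower-order type, i.e. that dividing by $\oom(r)$ and using $\om_\z(r)\le c(\te)\ro^{-1}r$ produces precisely the factor $r/\oom(r)=|x-y|/\oom(|x-y|)$ recorded in \eqref{acasisim}, and nothing worse.
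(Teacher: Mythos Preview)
Your proposal is correct and follows essentially the same route as the paper: localize by a cutoff $\zeta$ scaled to width $\rho$, apply Proposition~\ref{lemitas} to $v=\zeta u$, expand $\bL v=\zeta\,\bL u+E$ by Leibniz, and estimate the $\oom$-oscillation of each piece, with the error terms contributing exactly the $\rho^{-3}\|u\|+\rho^{-2}\|\nabla u\|+\rho^{-1}\|\nabla^2 u\|$ factor times $|x-y|/\oom(|x-y|)$. The only cosmetic difference is that the paper records the Lipschitz bound on $\zeta''$ explicitly (equation~\eqref{zedois}) rather than phrasing it as a bound on $\|\nabla E\|$; note that your $\|\nabla E\|$ estimate implicitly uses $|\nabla^3\zeta|\le c(\theta)\rho^{-3}$, which you should add to your stated cutoff bounds $k=0,1,2$.
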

\begin{proof}
 Fix a no-negative $C^\infty\,$
function $\te$, defined for $\,0\leq t \leq 1\,$ such that
$\te(t)=\,1$ for $\,0\leq t \leq \frac13\,,$ and $\te(t)=\,0$ for
$\,\frac23\leq t \leq 1\,.$ Further fix a positive real $\,\ro\,$, for
convenience $0<\ro<\,\frac12\,,$ and define
\begin{equation}
\z(x)= \left\{
\begin{array}{l}
1 \quad \textrm{for} \quad |x|\leq \ro\,,\\
\\
\te \big(\frac{|x|-\,\ro}{\ro}\big) \quad \textrm{for} \quad \ro\leq
|x|\leq 2\,\ro\,.
\end{array}
\right.%
\label{zetaze}
\end{equation}
Next we consider $\,\z(x)\,$ for points $\,x$ such that $\,\ro\leq\,|x|\leq\,2\,\ro\,,$
and leave to the reader different situations. Due to
symmetry, it is sufficient to consider the one dimensional case
$$
\z(t)=\,\te \big(\frac{t-\,\ro}{\ro}\big) \quad \textrm{for} \quad \ro
\leq t\leq 2\,\ro\,.
$$
Hence
$$
\z'(t)=\,\te' \big(\frac{t-\,\ro}{\ro}\big)\,\frac{1}{\ro}\,,
$$
and
$$
\z''(t)=\,\te'' \big(\frac{t-\,\ro}{\ro}\big)\,\frac{1}{\ro^2}\,.
$$
Further,
$$
\ro^2\, |\z''(t_2)-\z''(t_1)| \leq\,\Big| \, \te''
\Big(\frac{t_2-\,\ro}{\ro}\Big)
-\,\te''\Big(\frac{t_1-\,\ro}{\ro}\Big)\,\Big|\,,
$$
where
$$
\Big|\,\frac{t_2-\,\ro}{\ro}-\,\frac{t_1-\,\ro}{\ro}\,\Big|=\,\Big|
\frac{t_2-\,t_1}{\ro}\Big|\leq\,\frac13 <\,1\,.
$$
So
\begin{equation}
|\z''(t_2)-\z''(t_1)\,| \leq  \, \frac{1}{\ro^3}\,
\,[\,\te''\,]_{Lip}\,|\,|t_2-\,t_1|\,,
\label{zedois}
\end{equation}
 where $\,[\,\cdot\,]_{Lip}\,$ denotes the usual Lipschitz semi-norm.

\vspace{0.2cm}

Set
\begin{equation}
v=\,\z\,u\,.
\label{vezu}
\end{equation}
Note that $\,\bL\,v \in D_{\oom}(2\ro)\,,$ moreover the support of
$\,v\,$ is contained in $\,|x|<\,2\ro\,.$\par%
On the other hand,
\begin{equation}
\bL v=\,\z  \bL u+\, N\,.
\label{lvn}
\end{equation}
One has
$$
\begin{array}{l}
|\,(\z \bL u)(x)-\,(\z \bL u)(y)\,|  \leq\,\|\z\|\,[ \bL u ]_{\oom}\,\oom(|x-\,y|) +\,\|\,\na\,\z\,\|\,\| \bL u\,\|\,|x-\,y| \\
\\
 \leq \,[ \bL u ]_{\oom}\,\oom(|x-\,y|) +c\,\|\te' \|\,\frac{1}{\ro}\,\|\na^2\,u\|\,|x-\,y|\,.
\end{array}
$$
Hence,
\begin{equation}
[\,\z \bL u\,]_{\oom}  \leq\, [ \bL u ]_{\oom}+c\,\|\te' \|\,\frac{1}{\ro}\,\|\na^2\,u\|\,\frac{|x-\,y|}{\oom(|x-\,y|)}\,.
\label{princ}
\end{equation}
Next we prove that
\begin{equation}
[\,N\,]_\oom \leq\,c(\te)\,\Big(\,\frac{\|u\|}{\ro^3}+\,\frac{\|\,\na\,u\|}{\ro^2}+\,\frac{\|\,\na^2\,u\|}{\ro}\,\Big)\,\frac{|x-\,y|}{\oom(|x-\,y|}\,.
\label{eanes}
\end{equation}
One has
$$
N\cong (\na^2 \z)\,u+\,(\na\,\z)\,(\na\,u)\equiv A+\,B\,.
$$
By appealing in particular to \eqref{zedois}, straightforward calculations show that
$$
\begin{array}{l}
|A(x)-A(y)|\leq\,\|\na u\|\,\|\na^2\,\z\|\,|x-\,y|+\,\|u\| \frac{1}{\ro^3}[\te'']_{Lip} |x-\,y|\\
\\
\leq\,\Big( \frac{1}{\ro^2} \|\te''\|\,\|\na u\|+\, \frac{1}{\ro^3} [\te'']_{Lip}    \,\| u\|\,\Big)\,|x-y|\,.
\end{array}
$$
Hence
\begin{equation}
[\,A\,]_\oom \leq\,c(\te)\,\Big(\,\frac{\|u\|}{\ro^3}+\,\frac{\|\,\na\,u\|}{\ro^2}\,\Big)\,\frac{|x-\,y|}{\oom(|x-\,y|}\,.
\label{aass}
\end{equation}
Similar manipulations show that
\begin{equation}
[\,B\,]_\oom \leq\,c(\te)\,\Big(\,\frac{\|\,\na\,u\|}{\ro^2}+\,\frac{\|\,\na^2\,u\|}{\ro}\,\Big)\,\frac{|x-\,y|}{\oom(|x-\,y|}\,.
\label{bbss}
\end{equation}
Equation  \eqref{eanes} follows from \eqref{aass} and \eqref{bbss}\,. \par%
Lastly, from \eqref{lvn}, \eqref{princ}, and \eqref{eanes} one shows that
\begin{equation}
[\,\bL\,v\,]_\oom \leq \,[\,\bL\,u\,]_\oom+
\,c(\te)\,\Big(\,\frac{\|u\|}{\ro^3}+\,\frac{\|\,\na\,u\|}{\ro^2}+\,\frac{\|\,\na^2\,u\|}{\ro}\,\Big)\,\frac{|x-\,y|}{\oom(|x-\,y|}\,.
\label{tormes}
\end{equation}
In the following not labeled norms concern the domain $\,I(2\,\ro)\,.$\par%
From \eqref{vezu}, \eqref{slfi}, \eqref{masimus}, and \eqref{tormes} one gets
\begin{equation}
\begin{array}{l}
[\na^2\,u\,]_{\,\ho;\,\ro} \leq\,[\na^2\,v\,]_{\,\ho}
\leq\,C\,[\,\bL\,v\,]_{\oom}\\
\\
 \leq \,C\,[\,\bL\,u\,]_\oom+
\,c(\te)\,\Big(\,\frac{\|u\|}{\ro^3}+\,\frac{\|\,\na\,u\|}{\ro^2}+\,\frac{\|\,\na^2\,u\|}{\ro}\,\Big)\,\frac{|x-\,y|}{\oom(|x-\,y|}\,,
\end{array}
\label{acasis}
\end{equation}
where $\,0<2\,\ro<1\,.$
\end{proof}
\section{The interior regularity estimate in the variable coefficients case.}\label{muda}%
In this section we extend the estimate \eqref{acasisim} to uniformly
elliptic operators with variable coefficients
\begin{equation}
\bL=\,\sum_1^n a_{i\,j}(x) \pa_i\,\pa_j\,.%
\label{elld}
\end{equation}
To avoid non significant manipulations we assume that the
coefficients $\, a_{i\,j}(x) $ are Lipschitz continuous in
$\,I(2\,\ro)\,,$ which Lipschitz constants bounded by a constant
$\,A\,$. Following the same belief,
we left to the reader the introduction of lower order terms.\par%
We assume that
\begin{equation}
\oom(r)\leq\,k_1\,\ho(r)\,,
\label{junto}
\end{equation}
for some positive constant $k_1$, and $\,r\,$ in some neighborhood
of the origin. This yields $\,D_{\oom}(\Ov)=\,D_{\ho}(\Ov)\,,$
recall proposition \ref{eqnormes}. Assumption \eqref{junto} holds if
in equation \eqref{naflitas} the constant $C_1$ is finite. In fact,
$$
\lim_{r\rightarrow 0}\,\frac{\oom(r)}{\,\ho(r)}=\lim_{r\rightarrow
0}\,\frac{r\,\,\oom'(r)}{\oom(r)}=\,\frac{1}{C_1}\,,
$$
if the second limit exists.\par%
In the following we appeal to the constant coefficients operator
\begin{equation}
\bL_0=\,\sum_1^n  b_{i\,j} \pa_i\,\pa_j\,,%
\label{ellez}
\end{equation}
where  $\,b_{i\,j}=\, a_{i\,j}(0)\,.$ Clearly,
\begin{equation}
\bL_0 v(x)=\,\bL v(x)+ (\,\bL_0 -\,\bL) v(x)\,.
\label{zerob}
\end{equation}
One has
\begin{equation}
\begin{array}{l}
(\,\bL_0-\,\bL)v(x)-\,(\,\bL_0-\,\bL)v(y)=\\
\\
\,(\,( b_{i\,j}-\, a_{i\,j}(x)\,)\,(\,\pa^2_{i\,j}\,v(x)-\,\pa^2_{i\,j}\,v(y)\,)
+\,(\,( a_{i\,j}(y)-\, a_{i\,j}(x)\,)\,(\,\pa^2_{i\,j}\,v(y)
\end{array}
\label{semn}
\end{equation}
where, for convenience, summation on repeated indexes is assumed. Straightforward
calculations easily lead to the following pointwise estimate
\begin{equation}
|\,(\,\bL_0-\,\bL)v(x)-\,(\,\bL_0-\,\bL)v(y)\,| \leq\, c\,A
\big(\,2\,\ro\,[\,\na^2\,v\,]_\oom
+\,\|\na^2\,v\,\|\,\frac{|x-y|}{\oom(|x-y|)}\,\big) \,\oom(|x-y|)\,,
\label{ellez}
\end{equation}
where norms and semi-norms concern the sphere $\,I(0,\,2\,\ro)\,$.\par%
Next assume that $\,v\in C^{2}(2\,\ro)\,$  has compact support in
$\,I(0,\,2\,\ro)\,$, and  $\,\bL\,v \in D_{\oom}(2\,\ro)\,.$  Then,
by \eqref{zerob}, \eqref{semn}, and \eqref{larj} it follows that
$$
[\na^2\, v\,]_{\ho;\,2\,\ro}\leq\,C\,[\, \bL
\,v\,]_{\oom;\,2\,\ro}\,+\,C\,\ro\,[\,\na^2\,v\,]_{\oom;\,2\,\ro}
+\, C\,\|\na^2\,v\,\|\,\frac{|x-y|}{\oom(|x-y|)}\,.
$$
In particular
\begin{equation}
[\na^2\, v\,]_{\ho;\,2\,\ro}\leq\,C\,[\, \bL
\,v\,]_{\oom;\,2\,\ro}\,+\,C\,\ro\,[\,\na^2\,v\,]_{\oom;\,2\,\ro}
+\, C\,\|\na^2\,v\,\|\,.%
\label{seigual}
\end{equation}
Now, from \eqref{junto}, one gets
\begin{equation}
(\,1-C\,k_1\,\ro\,)\,[\na^2\, v\,]_{\oom;\,2\,\ro}\leq\,C\,(\,[\,
\bL \,v\,]_{\oom;\,2\,\ro} +\,\|\na^2\,v\,\|\,).%
\label{seigual}
\end{equation}
Next we set
$$
v=\,\z\,u
$$
and argue as done to prove \eqref{acasisim}\,. This proves the
following result, in the case of variable coefficients operators.
\begin{theorem}
Assume that the oscillation function $\,\oom\,$ satisfies conditions
\eqref{massim}, \eqref{simplelas}, \eqref{comka}, \eqref{eeles-2},
and \eqref{junto}. Further, assume that
$$
0<\,\ro \leq\,\frac{1}{2 C k_1}\,,
$$
and let $\,\bL\,u \in D_{\oom}(2\,\ro)\,,$ for some $u \in
C^{2}(2\,\ro)\,.$ Then $\,\na^2\,u \in \,D_{\ho}(\ro)\,,$ and
\begin{equation}
[\na^2\,u\,]_{\,\ho;\,\ro} \leq \,C\,[\,\bL\,u\,]_{\oom,\,2\,\ro}+
\,\frac{C}{\ro^3} \,\|\,u\|_{C^2(2\ro)}\,,%
\label{acasopas-2}
\end{equation}
for suitable positive constants $\,C\,,$ independent of $\ro\,$.
\end{theorem}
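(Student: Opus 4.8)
The plan is to transplant to the variable-coefficient setting the cut-off argument that proved Theorem~\ref{ajudas}, using the estimates \eqref{seigual} in place of Proposition~\ref{lemitas}. Take the cut-off $\z$ of \eqref{zetaze}, so that $\z\equiv 1$ on $I(\ro)$, $\z\equiv 0$ near $\pa I(2\ro)$, $\|\na^k\z\|\le c(\te)\ro^{-k}$ for $k=0,1,2$, and $[\na^2\z]_{Lip}\le c(\te)\ro^{-3}$; put $v=\z\,u$. Then $v\in C^2(2\ro)$ has compact support in $I(2\ro)$, $v=u$ on $I(\ro)$, and $\bL v=\z\,\bL u+N\in D_\oom(2\ro)$ (the remainder $N$ being a sum of products of smooth functions with $u$ and $\na u$, hence Lipschitz, hence in $D_\oom$ by \eqref{namely}). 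As in \eqref{princ}--\eqref{tormes} one obtains
\[
[\,\bL v\,]_{\oom;2\ro}\ \le\ [\,\bL u\,]_{\oom;2\ro}+c(\te)\Big(\tfrac{\|u\|}{\ro^3}+\tfrac{\|\na u\|}{\ro^2}+\tfrac{\|\na^2 u\|}{\ro}\Big)\,\sup_{0<r\le 2\ro}\tfrac{r}{\oom(r)},
\]
together with $\|\na^2 v\|_{2\ro}\le c(\te)\big(\|\na^2 u\|+\ro^{-1}\|\na u\|+\ro^{-2}\|u\|\big)$, all norms taken over $I(2\ro)$.

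Next apply the estimates \eqref{seigual} to $v$: they give $[\na^2 v]_{\ho;2\ro}\le C[\bL v]_{\oom;2\ro}+C\ro[\na^2 v]_{\oom;2\ro}+C\|\na^2 v\|_{2\ro}$ and, after absorbing the middle term by means of \eqref{junto} (equivalently $D_\oom=D_{\ho}$, cf.\ Proposition~\ref{eqnormes}) and the hypothesis $0<\ro\le(2Ck_1)^{-1}$, the bound $[\na^2 v]_{\oom;2\ro}\le 2C\big([\bL v]_{\oom;2\ro}+\|\na^2 v\|_{2\ro}\big)$; re-inserting the latter into the former yields, after enlarging $C$, the inequality $[\na^2 v]_{\ho;2\ro}\le C[\bL v]_{\oom;2\ro}+C\|\na^2 v\|_{2\ro}$ with $C$ independent of $\ro$. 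Since $v=u$ on $I(\ro)$ we have $[\na^2 u]_{\ho;\ro}\le[\na^2 v]_{\ho;2\ro}$; substituting the two displays above, using that $\sup_{0<r\le2\ro}r/\oom(r)$ is bounded by a constant depending only on $\oom$ (the quotient tends to $0$ with $r$ by \eqref{simplelas} and $\ro$ is bounded), and that $\ro^{-1},\ro^{-2}\le\ro^{-3}$ for $\ro<1$, all lower-order contributions collapse into a single $C\ro^{-3}\|u\|_{C^2(2\ro)}$. This is \eqref{acasopas-2}; since its right-hand side is finite, $\na^2 u\in D_{\ho}(\ro)$.

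The one step that is more than bookkeeping is the legitimacy of the absorption, which tacitly presupposes $[\na^2 v]_{\oom}<\infty$ \emph{a priori}. In dimension $\ge 2$ this is a genuine extra requirement, since $\bL v\in D_\oom$ does not of itself place the individual second derivatives of $v$ in $D_\oom$ (in one variable it does, $\na^2 v$ being then obtained by dividing $\bL v$ by a Lipschitz function). It holds automatically for $u\in C^\infty(\overline{I(2\ro)})$, where $\na^2 v\in Lip(\Ov)\subset D_\oom(\Ov)$; the general case $u\in C^2(2\ro)$, $\bL u\in D_\oom(2\ro)$, is then reached by approximation. On a slightly smaller ball one replaces $\bL u$ by mollifications $g_\ep$, which converge uniformly to $\bL u$ and satisfy $[g_\ep]_{\oom}\le[\bL u]_{\oom}$ (mollification never enlarges a modulus of continuity); one solves $\bL u_\ep=g_\ep$ with $u_\ep=u$ on that boundary, so that by interior regularity for Lipschitz-coefficient operators with smooth data $\na^2 u_\ep$ is regular enough that $[\na^2 u_\ep]_\oom<\infty$; the a priori estimate then applies to $u_\ep$ with constants independent of $\ep$, and letting $\ep\to0$ with Lemma~\ref{umlemmas} carries the bound over to $u$. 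Throughout, the point to monitor is $\ro$-uniformity: every constant other than the displayed $\ro^{-3}$ is $\ro$-independent because the constant $C$ of \eqref{seigual} is inherited from Proposition~\ref{lemitas}, hence ultimately from that of Theorem~\ref{ofundas-G}, which involves only $n$, $\oom$, and the kernel, and not the radius of the ball; the remaining $\ro$'s are the explicit negative powers produced by $\z$ and the $2\ro$ in \eqref{ellez}, already accounted for.
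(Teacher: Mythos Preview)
Your argument follows the paper's route exactly: Section~6 derives \eqref{seigual} for compactly supported $v$ and then says only ``Next we set $v=\z\,u$ and argue as done to prove \eqref{acasisim}'', which is precisely what your first two paragraphs carry out. Your third paragraph, on the a~priori finiteness of $[\na^2 v]_{\oom}$ required for the absorption step, actually goes beyond the paper, which passes over this point without comment; your mollification--approximation device is therefore an added justification rather than a different approach.
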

\section{Proof of theorems \ref{sufasvero} and \ref{sufasvero-2}.}\label{elipse2}%
The local estimates (estimates in $\,\Om_0 \,,$ $\,\Om_0 \subset
\subset\,\Om\,$) claimed in theorems \ref{sufasvero} and
\ref{sufasvero-2} follow immediately from the interior estimates, by
appealing to the classical method consisting in covering $\Ov_0\,$
by a finite number of sufficiently small spheres. For brevity, we
may estimate the quantities originated by the terms
$\|\,u\|_{C^2(2\ro)}\,,$ see the right hand sides of equations
\eqref{acasisim-2} and \eqref{acasopas-2}, simply by appealing to
the theorem \ref{laplaces}, which shows that solutions $u$ satisfy
the estimate
\begin{equation}
\|\,u\,\|_{C^2(\Ov)} \leq \,c\,\|\,f\,\|_*\,.%
\label{lapili-2}
\end{equation}
Concerning regularity up to the boundary one proceeds as follows.
The main point, the extension of the interior regularity estimate
\eqref{acasisim-2} from spheres to half-spheres, is obtained by
following the argument described in part II, section 5.6, reference
\cite{JBS}. One starts by showing that the interior estimate in
spheres also hold for half-spheres, under the zero boundary
condition on the flat part of the boundary. One appeals to
"reflection" of $\,u\,$ through the flat boundary, as an odd
function, in the orthogonal direction, from the half to the whole
sphere. In this way the half-sphere problem goes back to an
whole-sphere problem, absolutely similar to that considered in
section \ref{elipse}, see \cite{JBS}. Note tat is is sufficient, and
more convenient, to show this extension to half-spheres merely for
constant coefficient operators. The regularity result "up to flat
boundary points" claimed in theorem \ref{sufasvero} follows.\par%
Extension of the half-sphere's estimate, from constant coefficients
to variable coefficients, is obtained exactly as done in section
\ref{muda} for whole spheres. Obviously, this requires assumption
\eqref{junto}. Then, sufficiently small neighborhoods of boundary
points are regularly mapped, one to one, onto half-spheres. This
procedure allows extension of the local estimate for functions
$\,u\,$ defined on sufficiently small neighborhoods of boundary
points, vanishing on the boundary. A well known finite covering
argument leads to the thesis of theorem \ref{sufasvero-2}.\par%
The above extension to non-flat boundary points requires local
changes of coordinates. This transforms constant coefficients in
variable coefficients operators. So, local regularity up to non-flat
boundary points for constant coefficients operators can not be
claimed here. This is a challenging open problem. One may start by
considering the particular case of data in Log spaces.

\vspace{0.2cm}

We note that in the proof of Theorem 1, section 5.4, part II, in
reference \cite{JBS}, density of $\,C^1\,$ in H\H older spaces is
used. The same occurs in the proof of lemma B, section 5.3.
\section{The Log spaces $ D^{0,\,\al}(\Ov)\,.$ An intermediate regularity result. }\label{DOPO}
The following is a significant example of functional space
$\,D_{\oom}(\Ov)\,$ which yields intermediate (not full) regularity,
based on the well known formulae
\begin{equation}
\int \,\frac{(-\log{r})^{-\,\al}}{r} \,dr
=\,\frac{1}{\al-\,1}\,(-\log{r})^{1-\,\al}\,,%
\label{simvales}
\end{equation}
where $\,0<\,\al<\,+\infty\,$ (for $\,\al=\,1\,$ the right hand side
should be replaced by $\,-\log\,(-\log{r})\,).$ Equation
\eqref{simvales} shows that the $\,C_*(\Ov)\,$ semi-norm
\eqref{seis} is finite if
\begin{equation}
\om_f(r)\leq\, C\,(-\log{r})^{-\,\al}\,,%
\label{alfas}
\end{equation}
for some $\,\al >\,1\,$ and some constant $\,C>\,0\,.$ This led to
define, for each fixed $\,\al>\,0\,,$ the semi-norm
\begin{equation}
[\,f\,]_{\al} \equiv\,
\sup_{\,r \in (0,\,1) }\,\frac{\om_f(r)}{\om_\al(r)} \,,%
\label{alfas2}
\end{equation}
where the \emph{oscillation function} $\,\om_\al(r)\,$ is defined by
setting
\begin{equation}
\om_\al(r) =\,(-\log{r})^{-\,\al}\,.%
\label{alfaerre}
\end{equation}
Hence $\,[\,f\,]_{\al}\,$ is the smallest constant for which the
estimate
\begin{equation}
|f(x)-\,f(y)\,|\leq\, [\,f\,]_{\al} \,\cdot\,\Big(\,
\log\,\frac{1}{|\,x-\,y|}\,\Big)^{-\,\al}%
\label{alforreca}
\end{equation}
holds for all couple $\,x,\,y \in\,\Ov\,$ such that
$\,|x-\,y|<\,1\,.$ Note that we have merely replaced, in the
definition of H\"older spaces, the quantity
$$
\frac{1}{|\,x-\,y|} \quad \textrm{ by}  \quad
\log{\frac{1}{|\,x-\,y|}}\,,
$$
and allow $\,\al\,$ to be arbitrarily large.
\begin{definition}
For each real positive $\,\al\,,$ we set
\begin{equation}
D^{0,\,\al}(\Ov) \equiv\,\{\,f \in\,C(\,\Ov): \,[\,f\,]_{\al}
<\,\infty\,\}\,.%
\label{cstar}
\end{equation}
A norm is introduced in $\,D^{0,\,\al}(\Ov)\,$ by setting $
\,\|\,f\,\|_{\al}\equiv\,[\,f\,]_{\al}+\,\|\,f\,\|\,. $%
\label{defcstar}
\end{definition}
We call these spaces Log spaces. We remark that in reference
\cite{BV-arxiv}
we have called these spaces H-log spaces. \par%
The restriction $\,|x-\,y|<\,1\,$ in equation \eqref{alfas2} is due
to the behavior of the function $\,\log{r}\,,$ for $\,r \geq\,1\,.$
Note that, by replacing $\,0<\,|x-\,y|<\,1\,$ by
$\,0<\,|x-\,y|<\,\ro\,$ in equation \eqref{alfas2}, for some
$\,0<\,\ro<\,1\,,$ it follows that
\begin{equation}
[\,f\,]_{\al;\,\ro} \le\,[\,f\,]_{\al}
\leq\,[\,f\,]_{\al;\,\ro}+\,\frac{2}{(-\log{\ro})^{-\,\al}}\,\|\,f\,\|\,,
\label{clearnot}
\end{equation}
where the meaning of $\,[\,f\,]_{\al;\,\ro}\,$ seems clear. Hence,
the norms $\,\|\,f\,\|_{\,\al}\,$ and $\,\|\,f\,\|_{\al;\,\ro}\,$
are equivalent. We may also avoid the above $\,|x-\,y|<\,1\,$
inconvenient by replacing in the denominator of the right hand side
of \eqref{alfas2} the quantity $\,r\,$ by $\,r/R\,$, where $\,R
=\,\textrm{diam}\,\Om\,$, and by letting $r\in (0,\,R)\,.$ We rather
prefer the first definition, since the second one implies more ponderous notation.\par%
For $\,0<\,\bt <\,\al\,,$ and $\,0<\,\la \leq\,1\,,$ the (compact)
embedding
\begin{equation}
\quad C^{0,\,\la}(\Ov)\subset D^{0,\,\al}(\Ov) \subset D^{0,\,\beta}(\Ov)\, \subset \,C(\Ov)%
\label{oitooos}
\end{equation}
hold. Furthermore,  for $\,1<\,\al\,,$ one has the (compact)
embedding $\,
 D^{0,\,\al}(\Ov) \subset \,C_*(\Ov)\,.\,$  Note that $\,D^{0,\,1}(\Ov) \nsubseteq
\,C_*(\Ov)\,$.\par%
The properties proved in reference \cite{BV-arxiv} for $\,
D^{0,\,\al}(\Ov)\,$ spaces follow here from that proved for
$\,D_\oom (\Ov)\,$ spaces, since $\,\om_\al(r)\,$ is a particular
case of function  $\,\oom(r)\,.$ It is worth noting that in
reference \cite{BV-arxiv} we claimed, and left the proof to the
reader, that $\,C^{\infty}(\Ov)\,$ is dense in $\,
D^{0,\,\al}(\Ov)\,$. Actually, as shown in theorem \ref{osned}, this
result is false.\par%
The following result is a particular case of theorem
\ref{sufasvero}.
\begin{theorem}
Let $\Om_0 \subset \subset \,\Om\,$, $\,f\in\, D^{0,\,\al}(\Ov)\,$
for some $\,\al>\,1\,,$ and $\,u\,$ be the solution of problem
\eqref{lapnao}, where $\bL$ has constant coefficients. Then
$\,\na^2\,u \in\, D^{0,\,\al-\,1}(\Om_0)\,,$ moreover
\begin{equation}
\|\,\na^2\,u\,\|_{\,\al-1,\,\Om_0} \leq \,C\,\|\,f\,\|_\al\,,%
\label{hahega}
\end{equation}
for some positive constant $\,C= C(\al,\,\Om_0,\,\Om)\,.$ The
regularity result holds up to flat boundary points. Results are
optimal in the sharp sense, see section \ref{optimus}. In
particular,for $\,\beta >\,\al-1\,,$ $\,\na^2\,u \in\,
D^{0,\,\beta}(\Om_0)\,$ is false in general.%
\label{laplolas}
\end{theorem}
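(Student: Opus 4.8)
The plan is to read Theorem~\ref{laplolas} as the specialization of Theorem~\ref{sufasvero} to the oscillation function $\om_\al(r)=(-\log r)^{-\al}$ of \eqref{alfaerre}, so that the whole proof reduces to checking the structural hypotheses for $\om_\al$ and to identifying the associated $\ho$.

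First I would verify that, in a neighbourhood of the origin, $\om_\al(r)$ is an admissible oscillation function: it is continuous, non-decreasing, and vanishes at $0$; it is pointwise differentiable, with $\om_\al'(r)=\al\,r^{-1}(-\log r)^{-\al-1}>0$; and it is concave, since a direct computation gives
\[
\om_\al''(r)=\frac{\al}{r^{2}}\,(-\log r)^{-\al-2}\big((\al+1)+\log r\big),
\]
which is negative as soon as $-\log r>\al+1$. Next I would check the quantitative conditions of Section~\ref{domega}. Condition \eqref{massim} is precisely the convergence of $\int_0^R(-\log r)^{-\al}\,\frac{dr}{r}$, which holds because $\al>1$, by the primitive \eqref{simvales}. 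Condition \eqref{simplelas} is immediate, as $(-\log r)^{-\al}/r\to+\infty$. For \eqref{naflitas} one has $r\,\om_\al'(r)=\al(-\log r)^{-\al-1}$, hence $\om_\al(r)/(r\,\om_\al'(r))=\al^{-1}(-\log r)\to+\infty$; thus $C_1=+\infty$, the Log case anticipated in Section~\ref{domega}.

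With the hypotheses in hand, I would compute $\ho$ from \eqref{chapeu}: using \eqref{simvales} once more,
\[
\ho(r)=\int_0^r(-\log s)^{-\al}\,\frac{ds}{s}=\frac{1}{\al-1}\,(-\log r)^{-(\al-1)}=\frac{1}{\al-1}\,\om_{\al-1}(r),
\]
so $\ho$ and $\om_{\al-1}$ differ only by the fixed positive factor $(\al-1)^{-1}$. Since $\al-1>0$, $\om_{\al-1}$ is itself an admissible oscillation function, and Proposition~\ref{eqnormes} gives $D_\ho(\Ov)=D_{\om_{\al-1}}(\Ov)=D^{0,\,\al-1}(\Ov)$ with equivalent norms (note that for $1<\al<2$ this space is not contained in $C_*(\Ov)$, which is harmless here). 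Theorem~\ref{sufasvero} then yields $\na^2 u\in D_\ho(\Om_0)=D^{0,\,\al-1}(\Om_0)$ together with the estimate \eqref{hahega}; here one also uses, to pass from $\|\cdot\|_{\om_\al}$ to $\|\cdot\|_\al$, the equivalence of norms for different values of the parameter $R$ together with \eqref{clearnot}, which absorbs the discrepancy between the range $(0,R)$ of the general framework and the range $(0,1)$ in \eqref{alfas2}. Regularity up to flat boundary points is inherited verbatim from Theorem~\ref{sufasvero}.

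Finally, the sharpness clause, and in particular the failure of $\na^2 u\in D^{0,\,\bt}(\Om_0)$ for $\bt>\al-1$, would follow from the sharp-optimality part of Theorem~\ref{sufasvero}, to be established in Section~\ref{optimus}. Indeed, were $\om_\al\rightarrow\om_\bt$ a valid regularity statement for some $\bt>\al-1$, Definition~\ref{sharpas} would force $\om_{\al-1}(r)\le c\,\om_\bt(r)$, that is $(-\log r)^{\bt-(\al-1)}\le c$, which is impossible as $r\to0$ since $\bt-(\al-1)>0$. Thus the only genuinely substantive ingredient in the present statement is that last optimality assertion, which rests on the explicit counterexample deferred to Section~\ref{optimus}; everything else is the routine verification sketched above.
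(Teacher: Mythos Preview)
Your proposal is correct and follows essentially the same approach as the paper: both deduce Theorem~\ref{laplolas} as a direct specialization of Theorem~\ref{sufasvero} by verifying that $\om_\al$ is concave, differentiable, and satisfies \eqref{massim}, \eqref{simplelas}, and \eqref{naflitas} (with $C_1=+\infty$). Your version is in fact more explicit than the paper's, spelling out the computation of $\ho(r)=\frac{1}{\al-1}\,\om_{\al-1}(r)$ via \eqref{simvales} and the identification $D_\ho=D^{0,\al-1}$ through Proposition~\ref{eqnormes}, which the paper leaves implicit.
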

Theorem \ref{laplolas} is a particular case of theorem
\ref{sufasvero}. In fact, the oscillation function $\,\om_\al(r)\,$
is concave and differentiable for $\,r>\,0\,,$ satisfies
\eqref{massim} for $\,\al> \,1\,,$ and \eqref{simplelas} holds.
Further, condition \eqref{naflitas} follows from
\begin{equation}
\lim_{r\rightarrow 0}\,\frac{\om_\al(r)}{r\,\,\om'_\al(r)} =\,+
\,\infty\,.%
\label{biri}
\end{equation}
In reference \cite{BV-arxiv} the above regularity result was claimed
up to the boundary. However the proof is not complete, since
extension to non-flat boundary points requires estimates for
variable coefficients operators. The reason for this requirement was
explained in section \ref{elipse2}.

\vspace{0.2cm}

Next we apply to the results stated in theorem \ref{laplolas} to
illustrate, by means of a simple example, the meaning of
\emph{sharp} optimality. This concept will be discussed in a more
abstract form in section \ref{optimus}. Optimality of regularity
results is not confined here to the particular family of spaces
under consideration, but is something stronger. Let us illustrate
the distinction. The theorem \ref{laplolas} claims that
$\,\na^2\,u\in\,D^{0,\,\al-\,1}(\Ov)\,$. Optimality
\emph{restricted} to the Log spaces framework means that, given
$\,\beta>\,\al-\,1\,,$ there is at least a data  $\,f\in\,
D^{0,\,\al}(\Ov)\,$ for which $\,\na^2\,u\,$ does not belong to
$\,D^{0,\,\beta}(\Ov)\,.$ This situation does not exclude that (for
instance, and to fix ideas) for all $\,f\in\,D^{0,\,\al}(\Ov)\,$ the
oscillation $\,\om(r)\,$ of $\,\na^2\,u\,$ satisfies the stronger
estimate
\begin{equation}
\om(r)\leq\, C_f\,\big[\,\log
\big(\log{\frac1r}\,\big)\big]^{-1}\,\cdot\,(-\log{r})^{-\,(\al-1)}\,.%
\label{alfasin}
\end{equation}
In fact, for each $\,\beta>\,\al-\,1\,,$ one has
$$
(-\log{r})^{-\,\beta} << \,\big[\,\log
\big(\log{\frac1r}\,\big)\big]^{-1}\,\cdot\,(-\log{r})^{-\,(\al-1)}
\,<<\,(-\log{r})^{-\,(\al-1)}\,.%
$$
Sharp optimality avoids the above, and similar, possibilities. This
fact is significant in all cases in which full regularity is not
reached, as in Theorem \ref{laplolas}. This is the meaning giving
here to the sharpness of a regularity result.

\vspace{0.2cm}

Concerning references, not related to our results but merely to Log
spaces (mostly for $n=\,1\,$, or $\al=\,1$), the author is grateful
to Francesca Crispo for calling our attention to the treatise
\cite{fiorenza}, to which the reader is referred. In particular, as
claimed in the introduction of this volume, the space
$\,D^{0,\,1}(\Ov)\,$ was considered in reference \cite{shara}. See
also definition 2.2 in reference \cite{fiorenza}. Other references,
quoted in \cite{fiorenza}, are \cite{diening}, \cite{samko},
\cite{zhikov1}, \cite{zhikov2}, and \cite{zhikov3}.
\section{ H\"olog spaces $\,C^{0,\,\la}_\al(\Ov)\,$ and full regularity.}\label{hologos}
If, for some $\,\la>\,0\,,$ one has $\,\ho(r)=\,\la\,\oom(r)\,$ in a
neighborhood of the origin, then there is a $\,k>\,0\,$ such that
$\,\oom(r)=\,k\,r^\la \,.$ This fact could suggest that H\"older
spaces could be the unique full regularity class inside our
framework. However, \emph{full regularity} is also enjoyed by other
spaces. The following is a particularly interesting case. Consider
oscillation functions
\begin{equation}
 \om_{\la,\,\al}(r)=\,r^\la \,(-\log{r})^{-\,\al}\,, \quad \,r<\,1\,,%
\label{lalfas}
\end{equation}
where $\,0\leq\,\la <\,1\,$ and $\,\al \in\,\R\,.$  For
$\,\la=\,0\,$ and $\,\al>\,0\,$ we re-obtain the Log space
$\,D^{0,\,\al}(\Ov)\,$, for $\,\la>\,0\,$ and $\,\al=\,0\,$ we
re-obtain $\,C^{0,\,\la}(\Ov)\,$. Theorem \ref{compactos} shows that
(compact) inclusions
$$
C^{0,\,\la_2}(\Ov)\subset \,C^{0,\,\la}_\al(\Ov)\subset
\,C^{0,\,\la}_\beta(\Ov) \subset\,C^{0,\,\la}(\Ov)\subset
\,C^{0,\,\la}_{-\beta}(\Ov) \subset \,C^{0,\,\la}_{-\al}(\Ov)
\subset\,C^{0,\,\la_1}(\Ov)%
$$
hold for $\,\al>\,\beta>\,0\,$ and
$\,0<\,\la_1<\,\la<\,\la_2<\,1\,.$ The reader should note that the
set
$$
\bigcup_{\la,\,\al} \,C^{0,\,\la}_\al(\Ov)\,,
$$
where $\,0<\la<\,1\,,$ and $\,\al \in\,\R\,,$ is a \emph{totally}
ordered set, in the obvious way. In the totally ordered sub-chain
merely consisting of classical H\H older spaces, each
$\,C^{0,\,\la}\,$ space can be enlarged, to became an infinite,
ordered chain, $\,C^{0,\,\la}_\al(\Ov)\,$,  $\,\al \in\,\R\,.$
Clearly, the spaces $\,C^{0,\,\la}_\al(\Ov)\,$  enjoy all the
interesting properties described in section \ref{novicas}.\par%
To abbreviate notation, in this section we set
$$
\oom(r)\equiv \om_{\la,\,\al}(r)\,, \quad  [\,f\,]_{\,\oom} \equiv
[\,f\,]_{\la,\,\al}\,, \quad \textrm{and} \quad  \|\,f\,\|_{\,\oom}
\equiv  \|\,f\,\|_{\la,\,\al}\,.
$$
The following full regularity result holds.
\begin{theorem}
Let $\,f\in\,C^{0,\,\la}_\al(\Ov)\,$ for some $\,\la \in
\,(\,0,\,1\,)\,$ and some $\,\al\in \R\,.$ Let $\,u\,$ be the
solution of problem \eqref{lapnao}, where the differential operator
$\,\bL\,$ may have variable coefficients. Then $\,\na^2\,u
\in\,C^{0,\,\la}_\al(\Ov)\,.$ Moreover
\begin{equation}
\|\,\na^2\,u\,\|_{\la,\,\,\al} \leq \,C\,\|\,f\,\|_{\la,\,\,\al} \,,%
\label{haligas}
\end{equation}
for some positive constant $\,C\,.$
The result is optimal, in the sharp sense.%
\label{laplohas}
\end{theorem}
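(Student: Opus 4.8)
The plan is to recognize Theorem \ref{laplohas} as the instance of Theorem \ref{sufasvero-2} associated with the oscillation function $\oom(r)=\om_{\la,\,\al}(r)=r^\la\,(-\log r)^{-\al}$ of \eqref{lalfas}. Once we know that this $\oom$ satisfies the hypotheses of Theorem \ref{sufasvero-2} — in particular that the constant $C_1$ of \eqref{naflitas} is finite, so that \eqref{fullas} holds — there is nothing left to prove: Theorem \ref{sufasvero-2} directly gives $\na^2\,u\in D_\oom(\Ov)=C^{0,\,\la}_\al(\Ov)$ with $\|\na^2\,u\|_{\la,\,\al}\le C\,\|f\|_{\la,\,\al}$, for variable-coefficient $\bL$ and up to the boundary, and sharpness is automatic from full regularity. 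So the whole argument reduces to checking the structural assumptions on $\om_{\la,\,\al}$ and computing the associated $\ho$.

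First I would record the elementary properties of $\oom(r)=r^\la\,(-\log r)^{-\al}$ for $0<r<1$. It is $C^\infty$ for $r>0$, with $\oom(0)=0$; it is non-decreasing near the origin, since $\oom'(r)=r^{\la-1}(-\log r)^{-\al-1}\,[\la(-\log r)+\al]>0$ for small $r$ irrespective of the sign of $\al$; and it is concave near the origin, because the leading term of $\oom''(r)$ near $0$ is $\la(\la-1)\,r^{\la-2}(-\log r)^{-\al}<0$ thanks to $\la<1$, the logarithmic corrections being of lower order. Condition \eqref{massim} holds since $\int_0^R r^{\la-1}(-\log r)^{-\al}\,dr<\infty$ for $\la>0$; condition \eqref{simplelas} holds since $\oom(r)/r=r^{\la-1}(-\log r)^{-\al}\to+\infty$; and \eqref{naflitas} holds with
$$C_1=\lim_{r\to 0}\frac{\oom(r)}{r\,\oom'(r)}=\lim_{r\to 0}\frac{-\log r}{\la(-\log r)+\al}=\frac1\la,$$
which is finite and — this is the point where $\la<1$ is used — strictly larger than $1$. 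By Lemma \ref{beat} this also yields \eqref{eeles-2} (with $\lim_r B(r)=\la/(1-\la)$), and by Lemma \ref{cocas} concavity yields \eqref{comka}.

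Next I would compute $\ho$. By \eqref{chapeu}, $\ho(r)=\int_0^r s^{\la-1}(-\log s)^{-\al}\,ds$, which is finite for $\la>0$ and tends to $0$ as $r\to0$. Applying a de l'H\^opital rule to $\ho(r)/\oom(r)$, with $\ho'(r)=\oom(r)/r$ and the formula for $\oom'(r)$ above, gives
$$\lim_{r\to 0}\frac{\ho(r)}{\oom(r)}=\lim_{r\to 0}\frac{\oom(r)/r}{\oom'(r)}=\lim_{r\to 0}\frac{-\log r}{\la(-\log r)+\al}=\frac1\la\in(1,\infty).$$
Hence $\ho(r)$ and $\oom(r)$ are comparable in a neighbourhood of the origin — condition \eqref{junto} holds, in fact the two-sided estimate \eqref{pertinho} — so by Proposition \ref{eqnormes} we get $D_\ho(\Ov)=D_\oom(\Ov)=C^{0,\,\la}_\al(\Ov)$ with equivalent norms, which is exactly \eqref{fullas}. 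All hypotheses of Theorem \ref{sufasvero-2} are then in force (the $C_1<+\infty$ case), and the thesis of Theorem \ref{laplohas} follows, sharpness included. The only point asking for genuine care is the verification of concavity (or at least of \eqref{comka} together with \eqref{eeles-2}) for $\al<0$, where the factor $(-\log r)^{-\al}$ is itself increasing and unbounded; I expect this to be routine once one keeps track of which terms dominate in $\oom'$ and $\oom''$ near the origin, and in any event it is covered by the relaxed set of assumptions \eqref{massim}, \eqref{simplelas}, \eqref{comka}, \eqref{eeles-2}, \eqref{junto} under which Theorem \ref{sufasvero-2} was shown to hold.
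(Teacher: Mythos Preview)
Your proposal is correct and follows essentially the same route as the paper: verify that $\om_{\la,\al}$ is concave and differentiable near the origin, check \eqref{massim}, \eqref{simplelas}, and compute the limit in \eqref{naflitas} as $C_1=1/\la\in(1,\infty)$, then use de l'H\^opital to get $\lim_{r\to0}\ho(r)/\oom(r)=1/\la$ so that Proposition \ref{eqnormes} gives $D_\ho=D_\oom$ and Theorem \ref{sufasvero-2} applies. The paper's proof is the same argument, with the explicit formulae \eqref{oprimas}--\eqref{osegu} for $\oom'$ and $\oom''$ and the limit \eqref{lambes}; your identification of Theorem \ref{sufasvero-2} (rather than \ref{sufasvero}) as the relevant black box is in fact the more precise citation.
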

Note that full regularity
$\,\om_{\la,\,\al}\rightarrow\,\om_{\la,\,\al}\,$ is a little
surprising here. In fact, at the light of theorem \ref{laplolas}, we
could merely expected the intermediate regularity result
$\,\om_{\la,\,\al}\rightarrow\,\om_{\la,\,\al-\,1}\,.$\par%
\begin{proof}
We appeal to the theorem \ref{sufasvero}. Assumptions
\eqref{massim} and \eqref{simplelas} are trivially verified. Let's
prove \eqref{naflitas}.  Set
$$
L(r)=\,\log{\frac1r}\,.
$$
Straightforward calculations show that
\begin{equation}
\oom'(r)=\, r^{\la-\,1}\,L(r)^{-\,\al} \,\big(\,\la
+\,\al\,L(r)^{-\,1}\,\big)%
\label{oprimas}
\end{equation}
and that
\begin{equation}
\oom''(r)=\,-\, r^{\la-\,2}\,L(r)^{-\,\al} \,\Big(\,\la\,(1-\,\la)
-\, (\,2\la -\,1) \,\al \,L(r)^{-\,1}-\,
\al\,(\al+\,1)\,L(r)^{-\,2}\,\Big)\,.%
\label{osegu}
\end{equation}
Equation \eqref{osegu} shows that $\,\oom''(r)<\,0\,$ in a
neighborhood of the origin, since $\, \lim_{r\rightarrow
0}\,L(r)=\,+\,\infty\,.$ Hence $\,\oom\,$ is concave. Furthermore
\eqref{naflitas} holds since
\begin{equation}
\lim_{r\rightarrow 0}\,\frac{\oom(r)}{r\,\,\oom'(r)} =\,\frac1\la >
\,1\,.%
\label{flita222}
\end{equation}%
To prove full regularity we appeal to de l'H\^opital rule and to
\eqref{flita222} to show that
\begin{equation}
\lim_{r\rightarrow 0}\,\frac{\ho(r)}{\,\oom(r)}=\,\lim_{r\rightarrow
0}\,\frac{\oom(r)}{r\,\,\oom'(r)}=\,\frac1\la\,.%
\label{lambes}
\end{equation}%
In particular \eqref{pertinho} holds for $\,r\,$ in some
neighborhood of the origin. Hence proposition \ref{eqnormes} applies.\par%
\end{proof}
It would be interesting to study higher order regularity results in
the framework of H\"olog spaces.
\section{Sharpness of the regularity results.}\label{optimus}%
In this section we prove the \emph{sharpness} of our regularity
results (a simple example was shown at the end of section
\ref{DOPO}). The proof is quite adaptable to different situations,
local and global results, etc. We merely show the main argument. We
construct a counter-example, which concerns constant coefficients
operators (we could easily deny case by case), which shows that any
stronger regularity result can not occur. We start by considering
the Laplace operator $\De$. We remark that the argument applies to
the regularity results stated in theorems \ref{sufasvero} and
\ref{sufasvero-2}. However, in the second theorem, the conclusion is
obvious, due to full regularity.\par%
For convenience, we assume that $\,\oom(r)\,$ is differentiable, and
that there is a positive constant $C$  such that
\begin{equation}
\frac{\oom(r)}{r\,\,\oom'(r)}\geq\,C>\,0\,,%
\label{flotas}
\end{equation}%
for $r>\,0\,,$ in a neighborhood of the origin. Note that \eqref{flotas} holds, with $\,C=\,1\,,$ if $\,\oom(r)$ is concave.\par%
\begin{proposition}
Assume that $\,\oom(r)\,$ satisfies the above hypothesis, and let
$\,\ho_{0}(r)$ be a given oscillation function. Assume that the
results stated in theorem \ref{sufasvero} hold by replacing $\ho$ by
$\ho_{0}\,$. Then there is a constant $c$ for
which $\,\ho(r)\leq\,c\,\ho_0(r)\,.$%
\label{shap}
\end{proposition}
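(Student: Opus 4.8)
The plan is to construct an explicit counterexample via a suitable fundamental-solution type function. Since we work with the Laplacian $\De$ and may localize near a point, say the origin of $\Om$, the strategy is to build a right-hand side $\,f \in D_\oom\,$ whose associated solution $\,u\,$ has a second derivative whose oscillation $\,\om_{\na^2 u}(r)\,$ is bounded below by a constant multiple of $\,\ho(r)\,$. Then, if the regularity statement $\,\oom\rightarrow\ho_0\,$ held, we would have $\,\om_{\na^2 u}(r)\leq [\na^2 u]_{\ho_0}\,\ho_0(r)\,$, forcing $\,\ho(r)\leq c\,\ho_0(r)\,$ as claimed. The natural candidate for $\,f\,$ is a radial (or mildly perturbed radial) function whose profile near $0$ is essentially $\,\oom(|x|)\,$ up to harmless factors; one then computes the Newtonian potential $\,u = \bS f\,$ and isolates a single second-order derivative, e.g. $\,\pa_1^2 u\,$, whose modulus of continuity can be estimated from below.

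The key steps, in order: (1) Choose $\,f(x)\,$ supported in a small ball, equal near the origin to a radial function built from $\,\oom\,$; using concavity / condition \eqref{flotas} and \eqref{massim}, verify $\,f\in D_\oom\,$ (this is essentially the computation already indicated after Theorem \ref{osned}, where $\,\oom(|x|)\in D_\oom\,$ near the origin under \eqref{regdens}). (2) Solve $\,\De u = f\,$ via convolution with the Newtonian kernel; write $\,\pa_i\pa_j u = \mK_{ij}\ast f\,$ with the singular kernels of Section \ref{apotes-G}. (3) Evaluate, or lower-bound, the difference $\,|\pa_i\pa_j u(0) - \pa_i\pa_j u(x)|\,$ for small $\,|x|=r\,$. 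The main term is the ``near'' integral $\,\int_{|y|<2r}(f(y)-f(0))\mK_{ij}(0-y)\,dy\,$-type contribution, which for the radial profile produces, up to sign-definite angular factors that do not cancel, a quantity of order $\,\int_0^{cr}\frac{\oom(s)}{s}\,ds = \ho(cr)\,$; the ``far'' and ``intermediate'' pieces are controlled exactly as in the proof of Theorem \ref{ofundas-G} and contribute lower-order terms dominated by $\,\ho(r)\,$ (using \eqref{eeles-2}). Combining, $\,\om_{\na^2 u}(r)\geq c\,\ho(r)\,$ for small $r$. (4) Conclude: since the hypothetical improved statement gives $\,\om_{\na^2 u}(r)\leq C\,\ho_0(r)\,$, comparison yields $\,\ho(r)\leq c\,\ho_0(r)\,$ near the origin, and hence (adjusting the constant using boundedness of $\,\ho,\ho_0\,$ away from $0$) for all $\,r\in(0,R)\,$.

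The main obstacle I expect is step (3): ensuring that the leading contribution genuinely has size $\,\ho(r)\,$ rather than being killed by cancellation in the singular integral. A purely radial $\,f\,$ convolved against $\,\mK_{ij}\,$ (which has zero mean on spheres) could in principle produce cancellations; the remedy is to choose the test second-order derivative $\,\pa_i\pa_j\,$ adapted to the kernel, or to perturb $\,f\,$ so that it is not exactly radial, guaranteeing a non-degenerate angular integral $\,\int_S \sg(\theta)\,(\textrm{profile})\,dS \neq 0\,$. Equivalently, one can exploit the classical fact that $\,\De u = f\,$ with $\,f\,$ roughly $\,\oom(|x|)\,$ forces $\,u\,$ to behave near $0$ like $\,\int_0^{|x|}\frac{1}{s^{n-1}}\int_0^s t^{n-1}\oom(t)\,dt\,ds\,$, differentiate twice, and read off that the second radial derivative inherits an oscillation of order $\,\ho(r)\,$. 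The remaining steps are routine estimates of the type already carried out in Sections \ref{apotes-G} and \ref{elipse}, so the burden of the proof is concentrated in producing one clean lower bound.
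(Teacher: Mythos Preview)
Your approach has a real gap, and it is precisely the one you yourself flag in step (3). If $f$ is radial, say $f(x)=\oom(|x|)$ near the origin, then the solution $u$ of $\De u=f$ is radial too, and a direct ODE computation shows that the second derivatives of $u$ oscillate only like $\oom(r)$, not like $\ho(r)$. Concretely, with $u=u(r)$ one has $(r^{n-1}u')'=r^{n-1}\oom(r)$, whence $u'(r)/r=r^{-n}\int_0^r s^{n-1}\oom(s)\,ds\sim \oom(r)/n$, and then $u''(r)=\oom(r)-(n-1)u'(r)/r\sim \oom(r)/n$. So $|\pa_1^2 u(r,0,\dots,0)-\pa_1^2 u(0)|\sim\oom(r)$, and your claimed lower bound $\ho(r)$ is lost to exactly the cancellation you feared. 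Your suggested remedies (``perturb $f$'', ``adapt the derivative to the kernel'', ``read off the second radial derivative'') are either too vague to carry the argument or, in the last case, demonstrably give the wrong order.

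The paper avoids this difficulty by reversing the logic: instead of prescribing $f$ and solving, it prescribes $u$ directly as
\[
u(x)=\ho(|x|)\,x_1 x_2\,,
\]
and then \emph{computes} $f=\De u$. The harmonic polynomial factor $x_1 x_2$ is the whole point. Because $\De(x_1x_2)=0$, the Laplacian only sees derivatives of $\ho(|x|)$ paired with $x_1x_2/|x|^2$, and since $\ho'(r)=\oom(r)/r$ one gets $|\De u(x)|\le C\,\oom(|x|)$ using \eqref{flotas}; hence $f\in D_\oom$. On the other hand, for $\pa_1\pa_2 u$ the product rule produces, among nonnegative terms, the bare factor $\ho(|x|)$ (from differentiating $x_1x_2$ twice), so $|\pa_1\pa_2 u(x)-\pa_1\pa_2 u(0)|\ge \ho(|x|)$ for small $|x|$. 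The comparison with the hypothetical $\ho_0$-estimate is then immediate. This is a one-page computation with no singular-integral decomposition needed; the ``non-radial perturbation'' you were groping for is exactly multiplication by a degree-two harmonic polynomial.
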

We may say that any regularity result better than \eqref{hahega} is
false.
\begin{proof}
For simplicity, we start by assuming that $\,\bL=\,\De\,.$ Consider
the function
\begin{equation}
u(x)=\,\ho(|x|) \,x_1\,x_2\,,%
\label{emrn}
\end{equation}
defined in $\R^n\,, n\geq\,2\,$. Actually, we are merely interested
in the
behavior near the origin (see \eqref{cotresa-2} below).\par%
Straightforward calculations show that
\begin{equation}
\De\,u(x)=\,(n+\,2)\,\frac{x_1\,x_2}{|\,x\,|^2}\,\oom(x)
+\,\frac{x_1\,x_2}{|\,x\,|^2}\,|\,x\,| \,\oom'(|x|)\,.%
\label{eodel}
\end{equation}%
In particular, $\,\De\,u(0)=\,0\,$. By appealing to \eqref{flotas}
one shows that
$$
|\,\De\,u(x)-\,\De\,u(0)\,|=\, |\,\De\,u(x)\,| \leq\, C\,
\oom(|x|)\,.
$$
Hence, in a neighborhood of the origin, $\,f(x)=\,\De\,u(x)\,$
belongs to $\,D_{\oom}\,.$\par%
On the other hand, straightforward calculations show that
\begin{equation}
\pa_1\,\pa_2\,u(x)=\,\ho(|x|)+\,\frac{1}{|x|^2}\,\big(\,x_1^2+\,x_2^2-\,2\,\frac{x_1^2\,x_2^2}{|x|^2}\,\big)
\cdot\,\oom(|x|)+\,\frac{x_1^2\,x_2^2}{|x|^4}\,\cdot (\,|\,x\,| \,\oom'(|x|)\,)\,.%
\label{papar}
\end{equation}
In particular $\,\pa_1\,\pa_2\,u(0)=\,0\,,$ and
$$
|\,\pa_1\,\pa_2\,u(x)-\,\,\pa_1\,\pa_2\,u(0)\,|\geq\,\ho(|x|)
$$
for $ 0<|x| <<1\,,$ since in equation \eqref{papar} the coefficients
of $\,\oom(|x|)\,$ and of $\,|\,x\,| \,\oom'(|x|)\,$ are
nonnegative. On the other hand, if  $\,\ho_{0}(r)$ regularity holds,
one has
$$
|\,\pa_1\,\pa_2\,u(x)-\,\,\pa_1\,\pa_2\,u(0)\,| \leq\,
(\,c\,\|f\|_\oom \,)\,\ho_{0}(|x|)
$$
for some $c>\,0\,$. Hence $\,\ho(r) \leq\,c_0\,\ho_{0}(r)\,,$ for $
\,r>\,0\,,$ in a neighborhood of the origin.

\vspace{0.2cm}

If $\,\bL\,$  is given by \eqref{elle} we replace \eqref{emrn} by
\begin{equation}
u(x)=\,\ho(|x|) \,\sum_1^n b_{i\,j} x_i\,x_j \,,%
\label{emrn2}
\end{equation}
where $\,B\neq\,0\,$ is symmetric and
$$
 \,\sum_{i,\,j=\,1}^n \, a_{i\,j}\, b_{i\,j}=\,0\,.
$$
In particular, if a specific coefficient $\, a_{k\,l}\,$ vanishes,
we may simply choose $\,u(x)=\,\ho(|x|) \,x_k\,x_l\,,$ as done in
\eqref{emrn}.%

\vspace{0.2cm}

We localize the above result as follows. Assume that $\,0 \in
\Om\,,$ and consider the function
\begin{equation}
u(x)=\,\psi(|x|)\, \ho(|x|) \,x_1\,x_2\,,%
\label{cotresa-2}
\end{equation}
where $\psi(r)\,$ is non-negative, indefinitely differentiable,
vanishes for $\,r\geq\,\rho>\,0\,,$ and is equal to $\,1\,$ for
$\,|x|<\,\frac{\rho}{2}\,.$ The radius $\,\rho\,$ is such that
$\,I(0,\,\rho)\,$ is contained in $\,\Om\,.$ The above truncation
allows us to assume homogeneous boundary conditions in $\,\Om\,$ (we
may consider combinations of functions as above, centered in
different points in
$\,\Om\,$, with distinct radius, and distinct cut-off functions).%
\end{proof}

\vspace{0.2cm}

It is worth noting that in the above argument the specific
expressions of the coefficients of $\,\oom(|x|)\,$ and
$\,|x|\,\oom'(|x|)\,$ are secondary (even if the non-negativity of
these coefficients was exploited). They are homogeneous functions of
degree zero, without particular influence on the minimal regularity.
The crucial point is that the second order derivative
$\,\pa_1\,\pa_2\,u(x)\,,$ due to the term $\,\,x_1\,x_2\,$ in
\eqref{emrn}, leaves unchanged the "bad term" $\,\ho(|x|)\,.$ This
does not occur for derivatives $\,\pa_i^2\,u(x)\,,$ hence does not
occur for $\,\De\,u(x)\,.$\par%
It looks interesting to note that the "bad term" $\,\ho(|x|)\,$ can
not be eliminated by the other two terms which are present in the
right hand side of \eqref{papar}. Even when full regularity occurs
(like in H\"older and H\"olog spaces), the "bad term" $\,\ho(|x|)\,$
is still not eliminated. It simply is as regular as the other two
terms, $\,\oom(|x|)$\, and $\,|\,x\,| \,\oom'(|x|)\,$. See also
\cite{BV-ALBPAO}, section 6, for some comment.
\section{On data spaces larger then $\,\bC_*(\Ov)\,.$ }\label{cestrelas-mais}%
In the context of \cite{BVJDE}, Theorem \ref{laplaces} was
peripheral. Hence, the proof (written in a still existing
manuscript, denoted here [BVUN]), remained unpublished. Actually, at
that time, we proved the above result for more general elliptic
boundary value problems. The proofs depend only on the behavior of
the related Green's functions. Recently, by following the same
ideas, we have shown, see \cite{BVSTOKES}, that for every $\,\ff \in
\,\bC_*(\Ov)\,$ the solution $(\bu,\,p)$ to the Stokes system
\begin{equation}
\left\{
\begin{array}{l}
-\,\De\,\bu+\,\na\,p=\,\ff \quad \textrm{in} \quad \Om \,,\\
\na \cdot\,\bu=\,0  \quad \textrm{in} \quad \Om \,,\\
\bu=\,0 \quad \textrm{on} \quad \Ga%
\end{array}
\right.%
\label{doistokes}
\end{equation}
belongs to $\,\bC^2(\Ov)\times\,C^1(\Ov)\,$.

\vspace{0.2cm}

In the manuscript [BVUN] we also tried to extend the
result claimed in theorem \ref{laplaces} to data belonging to
functional spaces  $\,B_*(\Ov)\,$ \emph{containing} $\,C_*(\Ov)\,.$ By setting
$$
\om_f(x;\,r)= \, \sup_{y \in\,\Om(x;\,r)}\,|\,f(x)-\,f(y)\,|\,,
$$
we may write
\begin{equation}%
[\,f\,]_* =\,\int_0^R  \,\sup_{ \,x \in\,\Ov}\, \om_f(x;\,r)\,
\,\frac{dr}{r}\,.%
\label{catriz}
\end{equation}
So, together with $\,C_*(\Ov)\,,$ we have
considered a functional space $\,B_*(\Ov)\,$ obtained by commuting
\emph{integral} and \emph{sup} operators in the right hand side of
definition \eqref{catriz}: For each $\,f \in\,C(\Ov)\,,$ we defined
the semi-norm
\begin{equation}%
\langle\,f\,\rangle_* = \,\sup_{ \,x \in\,\Ov}\, \int_0^R \,
\om_f(x;\,r)\,
\,\frac{dr}{r}\,,%
\label{seis-bis}
\end{equation}
and a related functional space$\, B_*(\Ov)\,.$ We have shown that
the inclusion $ \,C_*(\Ov) \subset\,B_*(\Ov)\,$ is proper, by
constructing strongly oscillating functions which belong to
$\,B_*(\Ov)\,$ but not to $\,C_*(\Ov)\,.$ This construction was
recently published in reference \cite{BV-LMS}, Proposition 1.7.1.
Furthermore, in [BVUN], we have shown that Theorem \ref{laplaces}
and similar results hold in a weaker form, for data $\,f
\in\,B_*(\Ov)\,$, by proving that the first order derivatives of the
solution $\,u\,$ are Lipschitz continuous in $\,\Ov\,.$  The proof
is published in reference \cite{BV-LMS}, actually for data in a
functional space $\,D_*(\Ov)\,$ containing $\,B_*(\Ov)\,$. See
Theorem 1.3.1 in \cite{BV-LMS}. A similar extension holds for the
Stokes problem, as shown in reference \cite{BVJP}, Theorem 6.1,
where we have proved that if $\,\ff \in \,\bD_*(\Ov)\,,$ then the
solution $\,(\bu,\,p)\,$ of problem \eqref{doistokes} satisfies the
estimate $\, \|\,\bu\,\|_{1,\,1}+\,\|\,p\,\|_{0,\,1} \leq\,C\,
|\|\,\ff\,|\|_*\,.$ Full regularity for data in $\,\bB_*(\Ov)\,$
would follow from a possible density of regular functions in this
last space, a challenging open problem. The simple proof would be
obtained by replacing the space $ \,\bC_*(\Ov)\,$ by
$\,\bB_*(\Ov)\,$ in \cite{BVSTOKES}, section 4. A similar remark
holds for $\,\bD_*(\Ov)\,.$ However, in this last case, the desired
density result looks quite unlikely.

\end{document}